\theoremstyle{plain}
\newtheorem{theo}{Theorem}[section]
\theoremstyle{definition}
\newtheorem{example}[theo]{Example}
\newtheorem{definition}[theo]{Definition}
\theoremstyle{plain}
\newtheorem{lemma}[theo]{Lemma}
\newtheorem{theorem}[theo]{Theorem}
\newtheorem{proposition}[theo]{Proposition}
\theoremstyle{definition}
\newtheorem{problem}[theo]{Question}
\newtheorem{remark}[theo]{Remark}
\newcommand{\beq}{\begin{equation}}
\newcommand{\eeq}{\end{equation}}
\newcommand{\diam}{\operatorname{diam}}
\newcommand{\ve}{\varepsilon}
\renewcommand{\k}{\kappa}
\newcommand{\C}{\mathbb{C}}
\newcommand{\R}{\mathbb{R}}
\renewcommand{\H}{\mathbb{H}}
\newcommand{\bH}{\mathbb{H}}
\newcommand{\Z}{\mathbb{Z}}
\newcommand{\bN}{\mathbb{N}}
\newcommand{\bS}{\mathbb{S}}
\newcommand{\cI}{\mathcal{I}}
\newcommand{\ra}{\rightarrow}
\renewcommand{\square}{\kern1pt\vbox
{\hrule height 0.6pt\hbox{\vrule width 0.6pt\hskip 3pt
\vbox{\vskip 6pt}\hskip 3pt\vrule width 0.6pt}\hrule height0.6pt}\kern1pt}
\renewcommand\Re{\operatorname{Re}}
\renewcommand\Im{\operatorname{Im}}
\newcommand{\grad}{{\mathrm{grad}\,}}
\renewcommand{\Re}{{\rm Re}}
\renewcommand{\Im}{{\rm Im}}
\newcommand\refl[1]{R(#1)}
\newcommand{\be}{\begin{equation}}
\newcommand{\ee}{\end{equation}}
\def\<#1,#2>{\langle\,#1,\,#2\,\rangle}
\newcommand{\arr}{\begin{array}{rlll}}
\newcommand{\ea}{\end{array}}
\newcommand{\bea}{\begin{eqnarray}}
\newcommand{\eea}{\end{eqnarray}}
\newcommand{\bean}{\begin{eqnarray*}}
\newcommand{\eean}{\end{eqnarray*}}
\newcommand{\vph}{\varphi}
\def\sideremark#1{\ifvmode\leavevmode\fi\vadjust{
\vbox to0pt{\hbox to 0pt{\hskip\hsize\hskip1em
\vbox{\hsize3cm\tiny\raggedright\pretolerance10000
\noindent #1\hfill}\hss}\vbox to8pt{\vfil}\vss}}}
\newcounter{ssig}
\newcounter{ttig}
\begin{document}
\title[Cartan Coverings]{Quaternionic Cartan coverings and applications}
\
\author{Jasna Prezelj}
\address{UL FMF, Jadranska 19,
  Ljubljana, Slovenija, UP FAMNIT, Glagolja\v ska 8,  Koper, Slovenija, IMFM, Jadranska 19, Ljubljana, Slovenija}
\email { jasna.prezelj@fmf.uni-lj.si}
\author {Fabio  Vlacci}\address{MIGe Universit\`a di Trieste Piazzale Europa 1,\ Trieste,
  Italy} \email{ fvlacci@units.it} \thanks{\rm The first author
  was partially supported by research program P1-0291 and by research
  projects N1-0237, J1-3005 at Slovenian Research Agency.  The second author was partially supported by
  Progetto MIUR di Rilevante Interesse Nazionale PRIN 2010-11 {\it
    Variet\`a reali e complesse: geometria, topologia e analisi
    armonica}.  The research that led to the present paper was
  partially supported by a grant of the group GNSAGA of Istituto
  Nazionale di Alta Matematica `F: Severi'.}
\subjclass{30G35, 32L20}
\keywords{quaternionic Cartan coverings, antisymmetric cohomology groups}

\maketitle
\begin{abstract}
We present the topological foundations for solvability of  multiplicative Cousin problems formulated on an axially symmetric domain $\Omega \subset \H.$ In particular, we provide a geometric construction of quaternionic Cartan coverings, which are generalizations of (complex) Cartan coverings as presented in Section 4 of \cite{fp}. Because of the requirements of symmetry inherent to the domains of definition of quaternionic regular functions, the existence of quaternionic Cartan coverings of $\Omega$ is not a consequence of the existence of complex Cartan coverings; for the latter, there are no requirements  for the symmetries with respect to the real axis. Due to the real axis's special role, also the covering restricted to $\Omega \cap \R$ must have additional properties. All these required properties  were achieved by starting from a particular symmetric tiling of the symmetric set  $\Omega \cap (\R + i\R)$. Finally, we apply these results to prove the vanishing of 'antisymmetric' cohomology groups of planar symmetric domains for $n \geq 2$.
\end{abstract}

\section{Introduction}\label{intro}

 We denote by $\mathbb{H}$ the algebra of quaternions.
If $\Omega\subset \mathbb{H}$  is an axially symmetric open set, the set of slice--regular functions in $\Omega$ will be denoted by $\mathcal{SR}(\Omega)$.
The theory of slice--regular functions (shortly recalled in
Section \ref{prelim} where we mainly address our attention on specific features of
$\mathcal{SR}(\Omega)$ which will be useful later)
clearly shows many promising aspects to become a good framework to
consider the generalizations of Cousin problems for quaternionic
functions; indeed, the notion of slice--regularity is a good extension
of the notion of holomorphicity for quaternionic functions and
semi-regular functions play the role of meromorphic functions.  The
analogues of Weierstrass and of  Mittag-Leffler Theorems are
already obtained in the framework of slice--regular functions (see \cite{GSS}). Furthermore, domains of holomorphy are
generalized in terms of quaternionic axially symmetric domains.

In the present paper, we provide the topological foundation for the proofs of the analogues of the above theorems for an arbitrary axially symmetric domain $\Omega \subset \H.$
Glueing local solutions is the main technique for proving these theorems in the holomorphic setting; requested compatibility is related to Cousin problems. To deal with these problems in the framework of slice--regular functions,
 one has to work with Cartan coverings  (Definition \ref{CartanSequence}), which are  quaternionic analogues of Cartan coverings of domains in $\C$ with some additional properties. First, we require that no four distinct sets of a Cartan covering intersect, i.e. a Cartan covering has the order at most $3$; in addition, due to the fact
 that only the real numbers form the center of $\mathbb{H}$ - as opposed to complex numbers, where the product is commutative, we require, among other things, that the subcovering $\mathcal B_{\R} \subset \mathcal B$ of a Cartan covering $\mathcal B$ defined by  $\mathcal B_{\R} := \{U \in \mathcal B, U \cap \R  \ne \varnothing\}$ has the order $2$ and  forms `a chain', so no three distinct sets intersect. The construction of Cartan coverings on axially symmetric domains is the paper's core and occupies most of Section \ref{coverings}. Section \ref{prelim} gives some preliminaries on slice--regular functions and axially symmetric domains and Section \ref{group} presents the properties of the set of slice--regular functions on finite unions of disjoint basic sets.

The main result of this paper is the existence of such coverings for axially symmetric domains in $\mathbb{H}$.

\begin{theorem}[Main Theorem]\label{BasicCovering} Let ${\mathcal U}$ be a locally finite axially symmetric
  open covering of an axially symmetric domain $\Omega\subset \mathbb{H}$ and let $Z \subset
  \Omega$ be a discrete set of points or spheres. Then $\Omega $ admits a
  Cartan covering $\mathcal B$ 
  subordinated to $({\mathcal U},Z).$

  Even more, there exists a Cartan covering $\mathcal B = \{B_n\}_{ n \in \bN_0}$ of $\Omega$
  and a sequence $\{\ve_n\}_{n \in \bN_0}$ such that also the coverings
  \[ {\mathcal B}^t:= \{ B_n + B(0,t\ve_n)\}_{ n \in \bN_0} \]
  are Cartan coverings of $\Omega$  subordinated to  $({\mathcal U},Z)$ for all $t \in [0,1].$
\end{theorem}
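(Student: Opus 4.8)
The plan is to push the whole construction down to the closed upper half-slice and then recover $\mathcal B$ by axial rotation. Fix an imaginary unit $i$ and set $\Omega_i^+ := \Omega \cap \{x + iy : x,y \in \R,\ y \ge 0\}$. Since $\Omega$ is axially symmetric, it is the rotation of $\Omega_i^+$: every point of $\Omega$ lies on a sphere $x + y\,\bS$ with $x+iy \in \Omega_i^+$, the real points corresponding to $y = 0$, and the traces $\mathcal U \cap \Omega_i^+$ and $Z \cap \Omega_i^+$ carry all the information in $(\mathcal U, Z)$. It therefore suffices to cover $\Omega_i^+$ by relatively open planar tiles so that each tile lies in a single member of $\mathcal U$, meets $Z$ in at most one component, the resulting covering has order at most $3$, and the tiles touching the real axis form a chain of order $2$ along $\R$. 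The order and chain conditions are imposed directly on this upper-half picture, because (as explained below) it is exactly this picture that governs the quaternionic incidences.

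First I would produce such a tiling. In the open region $\{y > 0\}$ I would use a staggered, brick-like tiling by small curvilinear rectangles, refined finely enough that each brick is contained in a member of $\mathcal U$ and meets $Z$ in at most one component; the staggering of successive rows guarantees that no four bricks share a common point, so the order is $3$. Along the real axis I would place a single row of tiles, each straddling $\R$, arranged so that consecutive tiles overlap pairwise but no three have a common point, which gives the chain of order $2$ on $\R$; a buffer is kept so that no brick with $y>0$ reaches $\R$, ensuring that only the central tiles meet the real axis. Finally the first brick row is staggered relative to the central tiles so that a point near $\R$ lies in at most two consecutive central tiles together with at most one brick, preserving order $3$ across the seam.

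Next I would pass to $\Omega$ by rotation. Let $\pi : \Omega \to \Omega_i^+$ be the symmetrization $\pi(q) = \Re q + i\,|\Im q|$, a continuous surjection whose fibres are the spheres $x + y\,\bS$. Each planar tile $P$ is sent to the axially symmetric (basic) set $\widehat P := \pi^{-1}(P) = \{x + yJ : x + iy \in P,\ J \in \bS\}$. Since $\widehat P \cap \widehat Q = \pi^{-1}(P \cap Q)$, the incidence pattern of the family $\{\widehat P\}$ coincides exactly with that of the tiles in $\Omega_i^+$; consequently the order-$3$ bound and the chain of order $2$ on $\R$ pass to $\{\widehat P\}$, while subordination to $\mathcal U$ and the separation of the components of $Z$ are inherited because $\mathcal U$ and $Z$ are axially symmetric. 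Enumerating the resulting sets as $\mathcal B = \{B_n\}_{n \in \bN_0}$ yields a Cartan covering subordinated to $(\mathcal U, Z)$.

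For the stability statement I would argue that all defining conditions are monotone and robust. Each $B_n$ is compactly contained in some $U_n \in \mathcal U$, any two members that must remain disjoint have disjoint closures, no four members meet and no three members meeting $\R$ meet, and each component of $Z$ lies in the correct members and away from the others; by local finiteness of $\mathcal U$ and $\mathcal B$ and discreteness of $Z$, only finitely many of these constraints are active near any point, so they hold with a positive margin on every relatively compact piece of $\Omega$. Hence one may choose $\ve_n > 0$, tending to $0$ fast enough, so that $\mathcal B^1 = \{B_n + B(0,\ve_n)\}$ still satisfies every constraint; note that $B(0,r)$ is invariant under the conjugation action, so each $B_n + B(0, t\ve_n)$ remains axially symmetric. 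Finally, since $B_n + B(0, t\ve_n)$ grows monotonically with $t$ and equals $B_n$ at $t=0$, every forbidden configuration avoided at $t = 1$ is a fortiori avoided for all smaller $t$, while the covering property only improves; thus $\mathcal B^t$ is a Cartan covering subordinated to $(\mathcal U, Z)$ for every $t \in [0,1]$.

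The step I expect to be the main obstacle is the planar construction near the real axis: enforcing global order $3$ by staggering while keeping the central row of straddling tiles a chain of order $2$, and matching the staggered bricks to that central row without ever creating a triple intersection on or near $\R$. The rigidity forced by the reflection symmetry across $\R$ — the central tiles must be symmetric and the brick pattern must meet them cleanly — is what makes this seam the combinatorial heart of the argument.
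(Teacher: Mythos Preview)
Your overall architecture --- reduce to the upper half-slice, build a planar covering with the right incidence pattern, and rotate back --- matches the paper's strategy, and your stability argument for the $\mathcal B^t$ is essentially what the paper does. But two genuine gaps remain.

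\textbf{The ordering and the Cartan string condition.} A Cartan covering is not just a covering of order $3$ with a chain on $\R$: condition~(5) of Definition~\ref{CartanSequence} demands that for every $n$ the pair $(A_0\cup\cdots\cup A_{n-1},\,A_n)$ is a Cartan pair, so $A_n$ must meet the union of its predecessors in a finite disjoint union of closed basic sets (and this must hold recursively after intersecting everything with $A_n$). This is an \emph{ordering} requirement, not merely a combinatorial one, and an arbitrary enumeration of bricks will violate it --- e.g.\ the last brick you add may meet the previously built region in a closed curve rather than a union of arcs. The paper bakes the order into the construction: its Definition~\ref{def_tiling}(ii) requires each new tile to meet the union of earlier tiles only in disjoint arcs, and the inductive extension (outwards, then inwards, via necklaces) is designed precisely so that no new tile closes a loop against the already-tiled region. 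Your proposal never controls what $B_n\cap(B_0\cup\cdots\cup B_{n-1})$ looks like.

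\textbf{Tiling a general planar domain.} A staggered brick pattern works on a square (this is the paper's Example~\ref{TilingOfASquare}), but $\Omega_i^+$ is an arbitrary open set with possibly infinite topology. If you lay a brick grid on the plane and keep the bricks meeting $\Omega_i^+$, the ones near $\partial\Omega_i^+$ need not be discs and need not lie in $\Omega_i^+$; if instead you intersect them with $\Omega_i^+$, they can be disconnected or non-simply-connected, destroying conditions~(1) and~(2). The paper resolves this with a symmetric Runge exhaustion $\{K_n\}$ with smooth boundaries (Proposition~\ref{SymCov}) and an elaborate inductive scheme (proof of Theorem~\ref{MainTheoremTiling}) that tiles each $K_{n+1}\setminus\mathring K_n$ by first building necklaces around the holes and filled components and then mapping the leftover piece to a model square. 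Your sketch does not supply any mechanism for handling $\partial\Omega_i^+$ or the holes of the exhausting compacts, and this is exactly the part that occupies most of Section~\ref{coverings}.

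A minor point: the condition on $Z$ is not ``each tile meets at most one component of $Z$'' but rather $A_{n_1 n_2}\cap Z=\varnothing$ for $n_1\neq n_2$; the paper achieves this by a small perturbation of the grid so that it misses the discrete set $(\widetilde Z)_I$.
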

In the last section  (Section \ref{applications}), we apply Cartan coverings to prove a theorem, which is similar to results on vanishing of $H^2(D,\Z)$  for complex domains $D$ but  with additional symmetry properties.

 Initially, this paper was only a preliminary part of a longer paper about the existence of the solutions to Cousin problems in the framework of slice--regular functions; for the sake of the reader, the authors have decided to present the sections on Cartan coverings as a separate paper because of its potential interest also in different settings. At the same time, the results on the solutions of Cousin problems in  $\mathcal{SR}(\Omega)$, which will appear in a forthcoming paper soon
and require a specific cohomological approach to introduce, have  better presentations without a too technical part on Cartan coverings, which will be only recalled and applied.

\section{Preliminary results}\label{prelim}

 Let
 $\mathbb{S}$ be the sphere of imaginary units in $\mathbb{H}$, i.e. the set of
 quaternions $I$ such that $I^2=-1$.  Given any quaternion $z \not\in
 \mathbb{R},$ there exist (and are uniquely determined) an imaginary
 unit $I$, and two real numbers $x,y$ ( with $y> 0$) such that
 $z=x+Iy$. With this notation, the conjugate of $z$ will be $\bar z :=
 x-Iy$ and $|z|^2=z\bar z=\bar z z=x^2+y^2$.
 Each imaginary unit $I$ generates (as a real algebra) a copy
 of a complex plane denoted by $\mathbb{C}_I:=\R+I\R$. We call such a complex
 plane a {\em slice}.  The upper half-plane in $\mathbb{C}_I$, namely $\{x+yI\ :\ y>0\}$ will be
 denoted by $\C_I^+$ and called a {\em leaf}. Set $\C_I^-:=\C_{-I}^+$ and for  a subset $E \subset \H$ define
 $E_I:=E \cap  \C_I$,  $E_I^+:=E \cap  \C_I^+$, $E_I^-:=E \cap  \C_I^-$, $E^0:=E \cap  \R.$
\begin{definition}[Closure, interior, complex conjugation and complex symmetrization] \label{sym} For a set $D \subset \C$ we denote by $\overline{D}$ its closure, by
$\mathring{D}$ or $\mathrm{int}(D)$ its interior,  by $\refl{D}$ the conjugated (reflected) set,
$\refl{D} = \{\bar{z}, z \in D\}$ and by $S(D)$ the symmetrized set,  $S(D) = D \cup \refl{D}.$
 A set $D \subset \C$ is {\em symmetric} if $S(D) = D.$
An open set  $D \subset \C$ is {\em regular} if $\mathrm{int}(\overline{D})=D$.

A real-valued function $m$ on a symmetric set $D \subset \C$ is {\em symmetric} if $m(z) = m(\bar{z})$ for any $z \in D$.  We also use the notation $m(x,y)$ for  $m(z)$,  if $z = x + iy.$
\end{definition}
 Notice that for a smooth symmetric real-valued function defined near a point $(x_0,0)$, we always have $\grad m(x_0,0) =\lambda (1,0)$, $\lambda \in \R$.


\begin{definition}[$\ve$-neighbourhoods] Let $D \subset \C$ be a bounded  set. Given $\ve > 0,$ we say that an open set  $U$, $ D \Subset U \subset \{x \in \C, d(x,D) < \ve\}$  is an
 {\em  $\ve$-neighbourhood} of $D$.
\end{definition}

The classical lemma below provides the existence of tubular neighbourhoods of simple closed curves and collars of closed arcs using neighbourhoods of zero sections in the normal bundle.
\begin{lemma} \label{difeo} Let $l:[0,1] \rightarrow \C$ be either a smooth arc or a smooth simple closed curve, $n: l^* \ra \C^*$  its (smooth) unitary normal, where $l^*:=l([0,1])$. Then there exists $r > 0$ so that $\phi_l: l^* \times [-r,r] \ra \C$ defined by $\phi_l(z,t) = z + n(z)t$ is a diffeomorphism onto
the image.
\end{lemma}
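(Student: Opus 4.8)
The plan is to reduce the statement to the inverse function theorem combined with a compactness argument. First I would pass to a parametrization: writing $\Phi(s,t) := l(s) + n(l(s))\,t$ for $(s,t)\in[0,1]\times\R$, the map $\phi_l$ is exactly $\Phi$ read through the (homeomorphic, since $l$ is simple) identification of $l^*$ with $[0,1]$ for an arc, or with the circle $\R/\Z$ for a closed curve. Along the zero section the differential is easy to compute: $\d_s\Phi(s,0)=l'(s)$ and $\d_t\Phi(s,0)=n(l(s))$. Since $l$ is regular we have $l'(s)\ne 0$, and since $n$ is a unitary normal the vector $n(l(s))$ has unit length and is orthogonal to $l'(s)$; hence the columns of the real $2\times 2$ Jacobian $[\,l'(s)\mid n(l(s))\,]$ are linearly independent and its determinant is $\pm|l'(s)|\ne 0$. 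By the inverse function theorem $\Phi$ is a local diffeomorphism at every point of $[0,1]\times\{0\}$.

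Next I would make this uniform by compactness. The set on which $d\Phi$ is invertible is open and contains the compact set $l^*\times\{0\}$, so there is $r_1>0$ with $d\Phi$ invertible on the entire tube $[0,1]\times[-r_1,r_1]$; there $\Phi$ is a local diffeomorphism, in particular locally injective.

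The hard part will be upgrading local injectivity to global injectivity for a small enough radius, because a priori the tube could bend back and meet itself far away along the curve. I would argue by contradiction: if no radius worked, then for each $k$ there would be distinct pairs $(s_k,t_k)\ne(s_k',t_k')$ in $[0,1]\times[-1/k,1/k]$ with $\Phi(s_k,t_k)=\Phi(s_k',t_k')$. Using compactness of $[0,1]$ I would extract convergent subsequences $s_k\to s_*$ and $s_k'\to s_*'$ while $t_k,t_k'\to 0$; passing to the limit gives $l(s_*)=l(s_*')$, and since $l$ is simple this forces $s_*=s_*'$ (identifying endpoints in the closed-curve case). But then both pairs converge to the single point $(s_*,0)$, near which $\Phi$ is locally injective by the previous step, so for large $k$ the equality $\Phi(s_k,t_k)=\Phi(s_k',t_k')$ with distinct arguments is impossible. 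This yields $r_2>0$ on which $\Phi$ is injective.

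Finally, taking $r:=\min(r_1,r_2)$, the map $\phi_l$ becomes an injective local diffeomorphism on the compact domain $l^*\times[-r,r]$. A continuous injection from a compact space into the Hausdorff space $\C$ is a homeomorphism onto its image, and a local diffeomorphism that is also a homeomorphism onto its image has a smooth inverse; hence $\phi_l$ is a diffeomorphism onto its image. The only bookkeeping difference between the two cases is the shape of the domain --- a compact annulus when $l^*$ is a circle, a compact rectangle when $l^*$ is an interval --- and for the arc I would avoid any boundary subtlety in the inverse-function step by first extending $l$ smoothly to a slightly longer open arc and restricting at the end.
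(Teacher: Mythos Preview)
Your argument is correct and is the standard proof of this tubular-neighbourhood lemma. Note, however, that the paper does not actually prove Lemma~\ref{difeo}: it is introduced as ``the classical lemma below'' and stated without proof, so there is no paper proof to compare against. Your write-up supplies exactly the expected justification (inverse function theorem along the zero section, uniformity of the local-diffeomorphism radius by compactness, and a contradiction/compactness argument for global injectivity), which is presumably what the authors had in mind when calling the result classical.
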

Notice that if $l^*$ is a closed curve, then the image of $\phi_l$ is an open neighbourhood of $l^*$, called a tubular neighbourhood. If $l^*$ is an arc, the image of $\phi_l$  is a double-sided collar but not a neighbourhood of $l^*$, since the endpoints of $l^*$ are on the boundary, so to get an open neighbourhood, one has to attach a suitable disc centred at each of the endpoints of $l^*$.

We want to extend the notion of the tubular neighbourhood to sets with piecewise smooth boundaries and closed arcs.
Therefore we provide the following definition.


\begin{definition}[Tubular $\ve$-neighbourhoods] Let $D \subset \C$ be a bounded regular open set with a piecewise smooth boundary or a closed piecewise smooth arc.
We say that  an open regular set  $U \supset D$ with a piecewise smooth boundary is a {\em tubular neighbourhood of $D$} if

(a) $\overline{U}$ and $\overline{D}$ are homeomorphic, if $D$ is an open set or

(b) $\overline{U}$ is homeomorphic to a closed topological disc if $D$ is an arc.

\noindent In addition, we require\footnote{Such  tubular neighbourhoods are analogous to {\em regular} neighbourhoods  as in J. H. C. Whitehead,
'Simplicial spaces, nuclei and m-groups', Proc. London
Math. Soc. {45} (1939) 243-327 } that $\overline{D}$ is a strong deformation retract  of $\overline{U}$.
If, moreover,   $U$ is an $\ve$-neighbourhood of $D$  for a given $\ve > 0,$  we say that $U$ is a
 {\em tubular $\ve$-neighbourhood} of $D$.
\end{definition}

\begin{remark} Let $\ve > 0$ be given. If $D$ is a closed topological disc with a piecewise smooth boundary, then we can enlarge it a little bit near the nonsmooth points of the boundary to obtain a smooth closed topological disc. Then, a suitable smooth collar attached to the boundary will provide the desired $\ve$-neighbourhood. If $D$ is a closed piecewise smooth arc, such an $\ve$-neighbourhood can be constructed by first smoothing the nonsmooth points, using the tubular neighbourhood obtained from the normal bundle and then attaching  a suitable disc centred at each of the endpoints
of the arc.
\end{remark}

\begin{definition}
The
 \emph{(axial) symmetrization} $\widetilde E$ of a subset $E$ of $\bH$ is defined by
 $$\widetilde{E} = \{ x + I y:  x,y \in \R, I\in \bS, (x + \bS y) \cap E \neq \varnothing \}.$$
If $E=\{q\}$, we write $\widetilde{q}$ for the set  $\widetilde{\{q\}}$.

 A subset $\Omega$ of $\bH$ is called
 {\em (axially) symmetric} (in $\bH$) if $\widetilde{\Omega} = \Omega.$
\end{definition}


\begin{proposition}
Let $\Omega\subseteq \H$ be an axially symmetric domain. For all $I\in \mathbb S$, we have that \[
\Omega=\bigcup_{x+Iy\in \Omega_I} x+\mathbb Sy
\]
Moreover, for all $I\in \mathbb S$, the set $\Omega_I\subseteq \C_I$ is invariant under conjugation, i.e., $\Omega_I=\refl{\Omega_I}$.
\end{proposition}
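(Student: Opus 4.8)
The plan is to unwind the definition of axial symmetry and exploit the fact that the imaginary units are all conjugate under the action that symmetrization encodes. Write an arbitrary element of $\Omega$ in the normal form $q = x + Jy$ with $J \in \mathbb S$ and $y \ge 0$ (allowing $y = 0$ for real points), as guaranteed by the preliminary discussion following the introduction of slices. I first want to establish the set equality $\Omega = \bigcup_{x+Iy \in \Omega_I} (x + \mathbb S y)$ for a fixed but arbitrary $I \in \mathbb S$.

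For the forward inclusion, take $q = x + Jy \in \Omega$. The key point is that the defining condition $\widetilde\Omega = \Omega$ means $\Omega$ is a union of full spheres $x + \mathbb S y$: indeed, since $q \in \Omega$ the sphere $(x + \mathbb S y) \cap \Omega$ is nonempty, so by the definition of $\widetilde E$ the entire sphere $x + \mathbb S y$ lies in $\widetilde\Omega = \Omega$. In particular the two points $x + Iy$ and $x - Iy$ of that sphere belong to $\Omega$, hence to $\Omega_I = \Omega \cap \C_I$. Thus $x + Iy \in \Omega_I$ and $q = x + Jy \in x + \mathbb S y$, giving $q$ membership in the right-hand union. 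The reverse inclusion is immediate: if $x + Iy \in \Omega_I \subseteq \Omega$, then again the whole sphere $x + \mathbb S y$ lies in $\widetilde\Omega = \Omega$, so $x + \mathbb S y \subseteq \Omega$. This proves the displayed equality.

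For the second assertion, $\Omega_I = \refl{\Omega_I}$, I would argue directly: if $z = x + Iy \in \Omega_I$, then $z \in \Omega$, so the sphere $x + \mathbb S y \subseteq \Omega$ as above; since $\bar z = x - Iy = x + I(-y)$ also lies on this sphere, $\bar z \in \Omega$, and clearly $\bar z \in \C_I$, so $\bar z \in \Omega_I$. Hence $\refl{\Omega_I} \subseteq \Omega_I$, and applying conjugation once more (which is an involution) yields equality.

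I do not expect any serious obstacle here; the statement is essentially a repackaging of the definition of axial symmetry. The only point requiring mild care is the normal-form representation: one must observe that every quaternion, real or not, lies on a sphere $x + \mathbb S y$ with $y \ge 0$ (real points corresponding to $y = 0$), so that the argument covers $\Omega \cap \R$ uniformly with the non-real part. This ensures the union genuinely exhausts $\Omega$ and not merely its non-real portion.
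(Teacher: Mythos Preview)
Your proof is correct. The paper states this proposition without proof, treating it as an immediate consequence of the definition of axial symmetry; your argument is exactly the natural unwinding of that definition, so there is nothing to compare.
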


The following definition introduces a class of natural domains of definition for slice--regular functions.

\begin{definition}\label{slice_domain} A domain $\Omega$ of $\mathbb H$ is called a \emph{slice domain} if, for all $I\in \mathbb S$, the subset
$\Omega_I$ is a domain in $\C_I$ and if $\Omega^0=\Omega\cap\R\neq \varnothing$.
If, moreover, $\Omega$ is axially symmetric, then it is called a {\em symmetric} slice domain.
\end{definition}

On the other hand, slice functions (see [GP]) are naturally defined  on axially symmetric domains which are not necessarily slice domains.

\begin{definition} An axially symmetric domain $\Omega$ of $\mathbb H \setminus \R$ is called a \emph{product domain}.
\end{definition}

Hence, an axially symmetric domain $\Omega$ is either a symmetric slice domain or a product domain.

If $\Omega\subseteq \H$ is an axially symmetric domain, then for (one and hence for) all $I\in \mathbb S$, the set $\Omega_I$ is an open subset of $\C_I$ such that either it is a connected set that intersects $\R$, or it has two symmetric connected components separated by the real axis, swapped by the
conjugation. In the former case, $\Omega$ is an axially symmetric slice domain; in the latter case, $\Omega$ is a product domain.
\begin{definition} An axially symmetric domain $\Omega$  has {\em a slice--piecewise smooth boundary} if for some (and hence for all) $I \in  \bS,$ the set
$\Omega_I \subset \C_I$  has a piecewise smooth boundary.
\end{definition}

The following classes of domains will play a key role in this paper.

\begin{definition}\label{gjf}
  An axially symmetric domain $\Omega$ of $\mathbb H$ is called an {\em (open) basic set or a basic domain} if, for (one and hence for) all $I\in \mathbb S$, the single connected component or both the connected components of  $\Omega_I$ are simply connected.
An open basic set is also a \emph{basic neighbourhood} of any of its points.  We also  define the empty set to be an open basic set.
 An axially symmetric closed set  $V$ of $\mathbb H$ is called a {\em closed basic set}  if, for (one and hence for) all $I\in \mathbb S$, the set $V_I$ has either a single connected component if it intersects the real axis or has two connected components otherwise, and in both cases the connected components of  $V_I$ are closed topological discs.
\end{definition}

A closed basic set intersecting the real axis is a closed topological ball (i.e. homeomorphic to a closed ball). Notice that the intersection of a basic domain with the real axis is either empty or connected. The closure of a basic set is not necessarily a closed basic set. For example, the sets $\Omega_1:=\{ x + Iy, y > 0, x^2 + (y-1)^2 < 1, I \in \bS\}$ and $\Omega_2:=\{ x + Iy, y > 0, x \in (-1,1), (1-x^2)/2 < y < \sqrt{1-x^2}, I \in \bS\}$ are open basic sets (and product domains) while their closures are not closed topological balls. A closed basic set with a slice--piecewise smooth boundary has a basis of basic sets.

 The interested reader can find the standard definition of slice--regular functions and their properties in \cite{GMP, GSS, AdF1}. Here, we present an equivalent way of defining the set of slice--regular functions on axially symmetric domains, together with summation and $*$-multiplication, which is easier for a nonexpert reader. For the equivalence between the definitions of slice--regularity, we refer the reader to Lemma 6.11 in \cite{GMP}.

\begin{definition}[Slice--regular functions]\label{SR_simplified} Let $\Omega$ be an axially symmetric domain, $I \in \mathbb{S}$ and $f: \Omega_I \rightarrow \C_I$
a Schwarz symmetric holomorphic function, i.e. $f(x + Iy) = u(x,y) + I v(x,y) = \overline{f(x - Iy)},$ $u = \Re(f),$ $v = \Im(f)$. The extension of $f$ from $\Omega_I$ to $\Omega$, defined by $f(x + Jy) = u(x,y) + J v(x,y)$
for any $J \in \mathbb{S}$, is a {\em slice--preserving slice--regular function}. We denote the set of all such functions by $\mathcal{SR}_{\R}(\Omega)$.
Let $\{1,i,j,k\} \subset \mathbb{H}$ be a standard basis of $\mathbb{H}$.  The set of all {\em slice--regular functions} on $\Omega$ is
\[\mathcal{SR}(\Omega):=\{f_0 + f_1i + f_2j + f_3k, f_0,\ldots,f_3 \in \mathcal{SR}_{\R}(\Omega)\}.\] The set  $\mathcal{SR}^*_{\R}(\Omega) \subset \mathcal{SR}_{\R}(\Omega)$ denotes the subset set of all nonvanishing functions and
 $\mathcal{SR}^+_{\R}(\Omega)\subset \mathcal{SR}_{\R}(\Omega)$ denotes the set of all functions from $\mathcal{SR}_{\R}(\Omega)$ which are strictly positive on the real axis, provided that $\Omega \cap \R \ne \varnothing$.
\end{definition}

 The set $\mathcal{SR}(\Omega)$ is equipped with the topology of uniform convergence on compact sets. For a compact set $K \subset \Omega$ and $f \in \mathcal{SR}(\Omega)$ we define
 $|f|_K:= \max\{|f(q)|, q \in K\}.$ The sets $U(f,K, \ve) = \{g \in \mathcal{SR}(\Omega), |f - g|_K < \ve \}$ define a basis for this topology.

\begin{remark} Definition \ref{SR_simplified} immediately implies that any slice--regular function is uniquely defined by its restriction to any slice and vice versa. Given
$f_0,\ldots, f_3 \in \mathcal{O}(\Omega_I),$ there exists a unique extension of $f_0 + f_1 i + f_2 j + f_3 k$ to $\Omega$.
\end{remark}
Let us now define the \emph{imaginary unit function}
 \[\cI \colon \H \setminus \R \to \mathbb{S}\, \]
by setting $\cI(q) = I$ if $q \in \mathbb{C}_I^+$.  The function $\cI$ is slice--regular and slice--preserving because it is an extension of the function
defined as $f\equiv I$ on $\C_I^+$ and $f\equiv -I$ on $\C_I^-$,  but it is not an open mapping, and is not defined on any slice domain. 
\begin{definition}[The sum and the $*$-product]
Given any $f, g \in \mathcal{SR}(\Omega)$,
$f = f_0 + f_1 i + f_2 j + f_3 k$, $g=g_0+g_1i+g_2j+g_3k$,
we define the sum as $f+g:= (f_0+ g_0) + (f_1+g_1) i + (f_2 + g_2) j + (f_3+g_3) k$.  The $*$-product of $f$ and $g$ is defined as
$f*g:= (f*g)_0+(f*g)_1i+(f*g)_2j+(f*g)_3k,$ where
\begin{equation}\label{**}
  \begin{array}{ccc}
    (f*g)_0&=& f_0g_0-f_1g_1-f_2g_2-f_3g_3\\
    (f*g)_1&=& f_0g_1+f_1g_0+f_2g_3-f_3g_2\\
    (f*g)_2&=& f_0g_2-f_1g_3+f_2g_0+f_3g_1\\
    (f*g)_3&=& f_0g_3+f_1g_2-f_2g_1+f_3g_0
    \end{array}
\end{equation}
\end{definition}
Notice that the definition of the $*$-product mimics the usual product in quaternions. It is associative.
\begin{proposition}\label{regularitygp1} Let $\Omega \subseteq \H$ be an axially symmetric open set, and let $f, g\in \mathcal{SR}(\Omega)$ be two slice--regular functions. Then
\begin{enumerate}
\item[(a)] the $*$-product $f*g$ is a slice--regular function on $\Omega$  and $(f*g)(q)$ is either $0$ if $f(q) = 0$ or else $(f*g)(q)= f(q) g(f(q)^{-1}qf(q))$;
\item[(b)] if $f$ is slice--preserving, then $f*g=fg=g*f$, i.e, the $*$-product coincides with the pointwise product;
\item[(c)] if $f$ is slice--preserving, then $g \circ f$ is slice--regular;
\item[(d)]   if $K \subset \Omega$ is compact and axially symmetric, then $|f *g|_{K} \leq |f|_K |g|_K$, therefore the $*$-product is continuous in the topology of uniform convergence on  compact sets.
\end{enumerate}
\end{proposition}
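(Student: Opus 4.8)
The plan is to treat the four assertions in order, reducing each to one elementary structural fact together with a single genuinely quaternionic computation. The structural fact is that $\mathcal{SR}_{\R}(\Omega)$ is a commutative ring: sums, differences and pointwise products of Schwarz symmetric holomorphic functions on $\Omega_I$ are again Schwarz symmetric holomorphic, since $\C_I$ is commutative. Granting this, the first half of (a) is immediate: each component $(f*g)_\ell$ in \eqref{**} is an integer-coefficient polynomial in $f_0,\dots,f_3,g_0,\dots,g_3$, so $(f*g)_\ell\in\mathcal{SR}_{\R}(\Omega)$ for every $\ell$, whence $f*g\in\mathcal{SR}(\Omega)$ directly from Definition \ref{SR_simplified}. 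The pointwise evaluation formula in (a) is the part I expect to be the main obstacle, because its right-hand side is not itself slice--regular, so I cannot compare it with $f*g$ by restriction to a slice and must instead verify the identity pointwise.

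For that formula I would first record a ``stem'' normal form on a fixed slice. Writing $e_0=1,e_1=i,e_2=j,e_3=k$ and fixing $I\in\bS$, for $q=x+Iy$ one has $f_\ell(q)=u_\ell(x,y)+Iv_\ell(x,y)$, so $f(q)=A_f+IB_f$ with $A_f:=\sum_\ell u_\ell e_\ell$ and $B_f:=\sum_\ell v_\ell e_\ell$ quaternions depending on $(x,y)$ only (the unit $I$ may be pulled to the left since the $v_\ell$ are real). The crux is the identity $(f*g)(q)=(A_fA_g-B_fB_g)+I(A_fB_g+B_fA_g)$, i.e. that \eqref{**} is precisely the quaternionic product of the stem pairs $(A_f,B_f)$ and $(A_g,B_g)$; verifying that the structure constants in \eqref{**} reproduce the multiplication of $\mathcal{SR}_{\R}(\Omega)\otimes_{\R}\H$ is where I anticipate the real work. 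Granting it, I would compute $q':=f(q)^{-1}qf(q)=x+K'y$ with $K':=f(q)^{-1}If(q)$, observe $K'\in\bS$ (since $K'^2=f(q)^{-1}I^2f(q)=-1$), and note $q'\in\widetilde q$ because the inner automorphism $p\mapsto f(q)^{-1}pf(q)$ preserves real part and modulus. Evaluating the stem form of $g$ gives $g(q')=A_g+K'B_g$, and using $f(q)K'=If(q)=IA_f-B_f$ one obtains $f(q)g(q')=(A_fA_g-B_fB_g)+I(A_fB_g+B_fA_g)=(f*g)(q)$, as claimed; the case $f(q)=0$ is read off \eqref{**}.

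Assertions (b) and (c) reduce to the slice--preserving case, where $f=f_0$ and $f_1=f_2=f_3=0$. For (b), substituting into \eqref{**} yields $(f*g)_\ell=f_0g_\ell$ and $(g*f)_\ell=g_\ell f_0$, which coincide by commutativity of $\mathcal{SR}_{\R}(\Omega)$, so $f*g=g*f$; and pointwise $f_0(q)\in\C_{\cI(q)}$ can be factored to the left of $g(q)=\sum_\ell g_\ell(q)e_\ell$, giving $(f*g)(q)=f_0(q)g(q)=f(q)g(q)$. For (c), slice--preservation means $f(\C_I\cap\Omega)\subseteq\C_I$, so on each slice $g_\ell\circ f$ is a composition of Schwarz symmetric holomorphic maps; it is holomorphic, and Schwarz symmetric because $f(x-Iy)=\overline{f(x+Iy)}$ together with $g_\ell(\bar w)=\overline{g_\ell(w)}$ give $(g_\ell\circ f)(x-Iy)=\overline{(g_\ell\circ f)(x+Iy)}$. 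Hence $g_\ell\circ f\in\mathcal{SR}_{\R}(\Omega)$, and since $g\circ f=\sum_\ell(g_\ell\circ f)e_\ell$, it lies in $\mathcal{SR}(\Omega)$ by definition (working on the axially symmetric open set on which the composition is defined).

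Finally, (d) follows from the formula in (a). For $q\in K$ with $f(q)\neq0$ the conjugated point $q'=f(q)^{-1}qf(q)$ lies in $\widetilde q$, hence in $K$ by axial symmetry of $K$; thus $|(f*g)(q)|=|f(q)|\,|g(q')|\le|f|_K|g|_K$, while for $f(q)=0$ the left side vanishes, and taking the maximum over $q\in K$ gives $|f*g|_K\le|f|_K|g|_K$. Continuity in the topology of uniform convergence on compact sets then follows from bilinearity and the estimate $|f_1*g_1-f_2*g_2|_K\le|f_1|_K|g_1-g_2|_K+|f_1-f_2|_K|g_2|_K$. The only delicate point throughout is the quaternionic bookkeeping establishing the stem product identity in (a); once the ring structure of $\mathcal{SR}_{\R}(\Omega)$ and this normal form are in place, the remaining assertions are formal.
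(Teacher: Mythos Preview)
Your proposal is correct. The paper itself does not prove (a)--(c) at all but simply refers the reader to \cite{GSS}; only (d) is argued, and there the paper's proof is essentially identical to yours: from the pointwise formula in (a) one gets $|(f*g)(q)|=|f(q)|\,|g(q')|$ with $q'\in\widetilde q\subset K$, hence $|f*g|_K\le|f|_K|g|_K$, and continuity follows.

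The difference is that for (a)--(c) you supply a self-contained argument via the stem decomposition $f(q)=A_f+IB_f$, which is the Ghiloni--Perotti viewpoint on slice functions. Your key identity $(f*g)(q)=(A_fA_g-B_fB_g)+I(A_fB_g+B_fA_g)$ is indeed what the defining formula \eqref{**} amounts to once one expands $f_m(q)g_n(q)=(u_mu'_n-v_mv'_n)+I(u_mv'_n+v_mu'_n)$ and uses $e_me_n=\sum_\ell c^\ell_{mn}e_\ell$; the subsequent computation $f(q)K'=If(q)$ then cleanly yields the pointwise formula. This buys you independence from the cited monograph and makes transparent why (b) and (c) are formal consequences of commutativity in $\C_I$. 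The paper's approach buys brevity. There is no genuine gap in your argument; the ``real work'' you flag is a routine structure-constant check.
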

\begin{proof} For claims (a) -- (c) we refer the reader to \cite{GSS}. For (d), observe that by (a), we have $|(f*g)(q)|= |f(q)|| g(f(q)^{-1}qf(q))|$ and hence
$|(f*g)(q)|\leq  |f(q)| |g|_{\widetilde{q}}$. This implies that also $|f*g|_{\widetilde{q}}\leq  |f|_{\widetilde{q}} |g|_{\widetilde{q}}$ and so $|f *g|_{K} \leq |f|_K |g|_K$ for axially symmetric compact sets. Since axially symmetric compact sets exhaust $\Omega$, the $*$-product continuity follows from this estimate.
\end{proof}

\begin{remark}
Recall (see \cite{AdF1, AdF2, GPV}) that for any $f \in \mathcal{SR}(\Omega)$ the function $\exp_*f:= \sum_{n=0}^{\infty} f^{*n}/n!$ is slice--regular on $\Omega$ and if $|1-f|_{\Omega}\leq r < 1$, also  $\log_*(1-f):=-\sum_{n=1}^{\infty}f^{*n}/n$ is slice--regular and $\exp_*(\log_*(1-f))=1-f.$ In contrast with the complex case, there are nonvanishing  functions on balls in $\H$ without slice--regular logarithm.
\end{remark}

\section{Properties of slice--regular functions on finite disjoint unions of basic sets}\label{group}

We begin the section with this technical lemma.

\begin{lemma}\label{retraction} Let $V \subset \bH$ be a closed basic set, $B = \mathring{V}$ its interior. Then there exists a
homotopy  of slice--preserving slice--regular mappings with $H(\cdot,1)$ the identity mapping and $H(\cdot,0)$ a slice--preserving retraction $B \rightarrow \widetilde{q}$,
 where $q \in \R$ if $B$ is a slice domain and $q \not \in \R$ if $B$ is a product domain.
\end{lemma}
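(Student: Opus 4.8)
The plan is to reduce the construction to a single slice $\C_I$, where it becomes a contraction of a planar domain supplied by the Riemann mapping theorem. By Definition~\ref{SR_simplified}, giving a slice--preserving slice--regular self--map of the axially symmetric set $B$ is the same as giving a Schwarz symmetric holomorphic map $h\colon B_I\to\C_I$ for one fixed $I\in\bS$, the extension being $x+Jy\mapsto u(x,y)+Jv(x,y)$ with $h=u+Iv$; axial symmetry of $B$ guarantees that this extension again takes values in $B$ as soon as $h(B_I)\subset B_I$. Hence it suffices to produce a continuous family $\{h_t\}_{t\in[0,1]}$ of Schwarz symmetric holomorphic self--maps of $B_I$ with $h_1=\mathrm{id}$ and $h_0$ the appropriate retraction, and then extend slicewise. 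Since $V$ is a closed basic set, the slices $V_I$ are compact, so $B_I$ is a bounded, in particular proper, open subset of $\C_I$, and the Riemann mapping theorem is available on each of its simply connected components.

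\emph{Slice domain case.} Here $B_I$ is a bounded simply connected domain, invariant under conjugation and meeting $\R$. Fix $q\in B_I\cap\R$ and let $\phi\colon B_I\to\mathbb D$ be the Riemann map normalized by $\phi(q)=0$, $\phi'(q)>0$. The map $z\mapsto\overline{\phi(\bar z)}$ is another Riemann map with the same normalization, so by uniqueness $\phi(\bar z)=\overline{\phi(z)}$, and likewise $\phi^{-1}$ is symmetric. Put $h_t(z):=\phi^{-1}\bigl(t\,\phi(z)\bigr)$; convexity of $\mathbb D$ gives $h_t\colon B_I\to B_I$, symmetry of $\phi,\phi^{-1}$ gives $h_t(\bar z)=\overline{h_t(z)}$, and $h_1=\mathrm{id}$, $h_0\equiv q\in\R$. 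The slicewise extension $H$ is then a homotopy through slice--preserving self--maps of $B$ with $H(\cdot,1)=\mathrm{id}$ and $H(\cdot,0)\equiv q$, the constant retraction onto $\widetilde q=\{q\}$.

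\emph{Product domain case.} Now $B_I=B_I^+\sqcup B_I^-$ with $B_I^-=\refl{B_I^+}$, each component bounded and simply connected, and $B\cap\R=\varnothing$. Fix $q=a+Ib\in B_I^+$ (so $q\notin\R$) and a Riemann map $\psi\colon B_I^+\to\mathbb D$ with $\psi(q)=0$. Define $h_t:=\psi^{-1}(t\,\psi)$ on $B_I^+$ and extend to $B_I^-$ by $h_t(\bar z):=\overline{h_t(z)}$. This $h_t$ is holomorphic on the disjoint union $B_I$, Schwarz symmetric by construction, maps $B_I$ into itself, and satisfies $h_1=\mathrm{id}$, $h_0\equiv q$ on $B_I^+$ and $h_0\equiv\bar q=a-Ib$ on $B_I^-$. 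Its slicewise extension sends, for each $J\in\bS$, the leaf $\C_J^+\cap B$ to $a+Jb$ and $\C_J^-\cap B$ to $a-Jb$; thus $H(\cdot,0)$ maps $B$ onto the sphere $\widetilde q=a+\bS b\subset B$ and restricts to the identity on it, giving the required slice--preserving retraction $B\to\widetilde q$ with $q\notin\R$.

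It remains to note joint continuity of $(q,t)\mapsto H(q,t)$: as $h_t(z)=\phi^{-1}(t\phi(z))$ (respectively $\psi^{-1}(t\psi(z))$) is jointly continuous and the extension is assembled continuously from the real and imaginary parts of $h_t$, the family $H$ is a genuine homotopy of slice--preserving slice--regular mappings. Since $h_t(q)=q$ for every $t$, one even obtains a strong deformation retraction, but this is more than the statement requires. The point needing the most care is the product case: one must check that gluing the contraction of $B_I^+$ to its conjugate on $B_I^-$ yields a single Schwarz symmetric holomorphic function on the disconnected slice whose extension lands exactly on $\widetilde q$ and fixes it pointwise, together with the observation $\widetilde q\subset B$ that makes ``retraction onto $\widetilde q$'' meaningful. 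The Riemann--map symmetrization and the slice--domain case are standard once the reduction to one slice is set up.
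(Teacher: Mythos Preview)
Your proof is correct and follows essentially the same route as the paper's: conjugate the linear contraction of a disc by a Schwarz--symmetric Riemann map on one slice and extend slicewise. The only difference is cosmetic: in the slice--domain case you obtain the symmetry $\phi(\bar z)=\overline{\phi(z)}$ directly from the uniqueness of the normalized Riemann map, whereas the paper builds it by mapping $B_I^+$ to the upper half--disc (using Carath\'eodory boundary extension, since $\overline{B_I^+}$ is a closed topological disc) and then reflecting; your uniqueness argument is the cleaner of the two.
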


\begin{proof}

If $B$ is a product domain, choose  $q = a + ib \in B$. Then  $\widetilde{q}=a + b \bS \subset B$ is a sphere. Denote by $\Delta$ the unit disc in $\C_i$ centred at $2i$.
By Riemann mapping theorem there exists a biholomorphic mapping $f_i:B^+_i \ra \Delta.$  Set $f_i(z):= \overline{f^+_i(\bar{z})}$ on $B_i^-.$ Then the extension of $f_i$ to $\widetilde{\Delta}$ is slice--regular slice--preserving and
such that  $f(a + Ib) = 2I$ for each $I \in \bS$.
Set $H_1(q,t) = 2\mathcal{I}(q)+ t(q-2\mathcal{I}(q))$ on  $\widetilde{\Delta} $. 
Then $H:= f^{-1} \circ H_1 \circ f$
is the desired homotopy.\\

 If $B$ is a slice domain choose $q \in B \cap \R$, so  $\widetilde{q} = \{q\}$.
Fix $I \in \bS$  and let  $\mathbb{D}$ be the unit disc. Put $\mathbb{D}^+:=\{ z\in\C_I : |z|<1, \Im (z)\geq0\}$, $\mathbb{D}^-:=\{ z\in\C_I : |z|<1, \Im (z)\leq 0\}$, $(\partial B_I)^+:= \partial B_I \cap \C_I^+$ and notice that since $\overline{B_I} = V_I$ is a closed topological disc, also the set $\overline{B_I^+}$ is a closed topological disc.

The claim follows from the fact that there exists, by Riemann mapping theorem, a biholomorphic mapping $f_I:B_I \ra B^2(0,1),$ such that $f_I(\bar{z}) = \overline{f_I(z)}$.

To see this, let $\vph: B_I^+ \ra \mathbb{D}$ be a biholomorphism. Because $\partial B_I^+$ is a Jordan curve, the mapping extends to a homeomorphism  $\vph: \overline{B_I^+} \ra \overline{ \mathbb{D}}$  and hence $\overline{\vph(B^0)}$ is a closed arc in the unit circle. Therefore $\vph(B_I^+ \cup B^0)$ can be mapped  to $\mathbb{D}^+:=\{ z\in\C_I : |z|<1, \Im (z)\geq0\}$ via a Riemann mapping $\psi$,  so that $(\psi)^{-1}(-1,1) = \vph(B^0)$. The mapping
$f_I^+:= \psi \circ \vph$,
extended to $\mathbb{D}^-:=\{ z\in\C_I : |z|<1, \Im (z)\leq 0\}$ by reflection,  defines the mapping $f_I$ with the required properties.
The  extension of $f_I$ to $B$  slice--preserving slice--biregular mapping $f:B \rightarrow B^4(0,1) \subset \bH, $ such that $ f(q) = 0.$
 Set $H_1(q,t)=tq $ on $B^4(0,1)$.  Then $H:= f^{-1} \circ H_1 \circ f$
is a homotopy  of slice--preserving slice--regular mappings with $H(\cdot,1)$ the identity mapping and $H(\cdot,0)$ a slice--preserving retraction $B \rightarrow \{q\}$.
\end{proof}

\begin{proposition}\label{ConnGroup} Let $B \subset \bH$ be a basic domain  with $\overline{B}$ a closed basic set. Then, the group of nonvanishing slice--regular functions
$(\mathcal{SR}^*(B),*)$ is connected.

 In addition, if $B$ is a slice domain, then the group $(\mathcal{SR}^*_{\R}(B),*)$ has two connected components and if $B$ is a product domain then the group $(\mathcal{SR}^*_{\R}(B),*)$ has one connected component.
\end{proposition}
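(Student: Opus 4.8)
The plan is to use the retraction homotopy of Lemma~\ref{retraction} to deform an arbitrary element of the group onto a function that factors through the sphere $\widetilde q$, and then to connect these reduced functions inside an explicit finite--dimensional model space. Write $1$ for the constant function, the identity of the group; since the topology is that of uniform convergence on compact sets, it suffices to produce continuous paths to $1$. Let $H$ be the homotopy of Lemma~\ref{retraction}. By construction each $H(\cdot,t)$ is slice--preserving and maps $B$ into $B$ (in the slice case the model map $H_1(q,t)=tq$ preserves the ball $B^4(0,1)$, in the product case $H_1(q,t)=2\cI(q)+t(q-2\cI(q))$ preserves $\widetilde{\Delta}$). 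Given $f\in\mathcal{SR}^*(B)$, the assignment $t\mapsto f\circ H(\cdot,t)$ is a path in $\mathcal{SR}^*(B)$: it is slice--regular by Proposition~\ref{regularitygp1}(c), since the inner map $H(\cdot,t)$ is slice--preserving; it is nonvanishing because $f$ is and $H(\cdot,t)(B)\subseteq B$; and it is continuous in $t$. If $f$ is moreover slice--preserving, the whole path lies in $\mathcal{SR}^*_{\R}(B)$. This connects $f$ to $g:=f\circ H(\cdot,0)$, which is constant on each leaf $\C_I^+\cap B$, equal there to the value of $f$ at $a+Ib\in\widetilde q$. Using the representation of a slice function on a sphere, $g(x+Iy)=\alpha+I\beta$ for $y>0$, so $g$ is determined by the pair $(\alpha,\beta)\in\H\times\H$ (by the single value $f(q)\in\H\setminus\{0\}$ when $\widetilde q=\{q\}$ is a point).

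It then remains to connect each reduced function to $1$. For the full group $(\mathcal{SR}^*(B),*)$ with $B$ a slice domain, $g$ is the constant $f(q)\in\H\setminus\{0\}\cong\R^4\setminus\{0\}$, which is path--connected, so $g$ joins to $1$. When $B$ is a product domain, $g$ is nonvanishing precisely when $\alpha+I\beta\neq0$ for every $I\in\bS$; a direct computation shows that $\alpha+I\beta=0$ for some $I\in\bS$ if and only if $\beta\neq0$, $|\alpha|=|\beta|$ and $\Re(\alpha\bar\beta)=0$ (or $\alpha=\beta=0$). This locus is a real--algebraic subset of $\H^2=\R^8$ of codimension two, so its complement is connected; since $(\alpha,\beta)\mapsto g$ is linear, hence continuous into $\mathcal{SR}^*(B)$, a path from $(\alpha,\beta)$ to $(1,0)$ lifts to a path from $g$ to $1$. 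Thus $(\mathcal{SR}^*(B),*)$ is connected in both cases.

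Repeating the reduction with slice--preserving $f$ forces $\alpha,\beta$ to be real. If $B$ is a slice domain, $g$ is a real constant $f(q)\in\R\setminus\{0\}$; since slice--preserving functions are real on $\R$, the map $s\mapsto f_s(q)$ is a continuous $\R\setminus\{0\}$--valued function along any path in $\mathcal{SR}^*_{\R}(B)$, so $\sgn f(q)$ is a locally constant invariant, giving at least two components, while within a fixed sign the constants connect through $\R_{>0}$ or $\R_{<0}$; hence $(\mathcal{SR}^*_{\R}(B),*)$ has exactly two components. If $B$ is a product domain, $g$ is determined by $(u_0,v_0)\in\R^2$ with $u_0+Iv_0\neq0$ for all $I$, i.e.\ by $(u_0,v_0)\in\R^2\setminus\{0\}$; this set is connected, so $g$ joins to $1=(1,0)$ and the group has a single component.

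The substantive points are geometric rather than algebraic: checking that $H(\cdot,t)$ keeps its image inside $B$ for all $t$ (so that $f\circ H(\cdot,t)$ remains nonvanishing) and that post--composition preserves slice--regularity through Proposition~\ref{regularitygp1}(c); and, in the product--domain case of the full group, identifying the vanishing locus in $\H^2$ and running the codimension--two argument for connectedness of its complement. Once these are in place, the slice--preserving cases are routine, the only delicate point being the sign invariant that accounts for the two components over a slice domain.
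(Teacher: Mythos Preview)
Your proof is correct and follows the same overall strategy as the paper: use the retraction homotopy of Lemma~\ref{retraction} to reduce an arbitrary $f\in\mathcal{SR}^*(B)$ to a function depending only on the leaf (a constant in the slice case, a function of the form $\alpha+\cI\beta$ in the product case), then connect the reduced function to $1$ inside a finite-dimensional model. The one genuine difference is in the product-domain case for the full group: the paper writes $q_0+\cI q_1=(q_0q_1^{-1}+\cI)q_1$, homotopes the constant factor $q_1$ to $1$, and then perturbs $q_0$ by a small real $\varepsilon$ to push past the purely imaginary obstruction, whereas you identify the vanishing locus $\{(\alpha,\beta)\in\H^2:\exists\,I\in\bS,\ \alpha+I\beta=0\}$ as a real-algebraic set of codimension two in $\R^8$ and invoke connectedness of its complement. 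Your argument is cleaner and avoids checking slice-regularity and nonvanishing at each intermediate step; the paper's explicit homotopies have the virtue of being entirely elementary and self-contained, with no appeal to dimension-theoretic facts.
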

\begin{proof}

If $B$ is a slice domain, Lemma \ref{retraction} shows that $F = f \circ H$ is a homotopy
through nonvanishing slice--regular functions between $f$  and a nonzero constant, which is homotopic to the constant $1$ through nonzero constants.

   If  $B$ is a product domain and $f$ a nonvanishing slice--regular  function, then Lemma \ref{retraction} gives a homotopy
    $F = f \circ H$ through nonvanishing slice--regular functions between $f = F(\cdot, 1)$ and $F(q, 0) = f(a + \mathcal{I}(q)b) = q_0 + \mathcal{I}(q)q_1$ which is  nonvanishing.

     If $q_1 = 0,$ we are done. If not, we can write
     $q_0 +  \mathcal{I}q_1 = (q_0q_1^{-1} +  \mathcal{I})q_1$ and by using a homotopy between $q_1$ and $1$ in $\bH \setminus\{0\},$ we may assume that $F(q, 0)$ is of the form  $F(q, 0)= q_0 +  \mathcal{I}(q).$  Consider a homotopy  $h_t(q): q \mapsto q_0 +  \mathcal{I}(q)t$.
     If $h_t(q) \ne 0$, then we are done. Otherwise there exist $p,t_0$ so that $q_0 + \mathcal{I}(p)t_0 = 0$ and hence $q_0$ is purely imaginary.  For any $\ve \in (0,\infty)$, the map $q \mapsto q_0 +  \mathcal{I}(q)$ is homotopic to $q \mapsto q_0 + \ve +\mathcal{I}(q)$ through nonvanishing functions and then  the homotopy $\chi_t(q)= q_0 + \ve + \mathcal{I}(q)t$ is nonvanishing with $\chi_0(q) = q_0 + \ve \ne 0$.
     Clearly, $\chi_0$ is homotopic to $1$ through nonzero constants.

        Assume now $f \in \mathcal{SR}^*_{\R}(B)$.
          If  $B$ is a slice domain, then  either $f(B \cap \R) \subset (0,\infty)$ or $f(B \cap \R) \subset (-\infty,0)$. Since slice--preserving functions map the real axis to itself, slice--preserving functions $f$ and $g$ with $f(B \cap \R) \subset (0,\infty)$ and $g(B \cap \R) \subset (-\infty,0)$  cannot be connected through a homotopy of nonvanishing slice--preserving functions.
           Since $B$ is a slice domain, then the above $F$  from Lemma \ref{retraction} connects $f$ with $F(q, 0) = f(a)$  and the value $f(a)$ is a nonzero real number. Because $\R \setminus \{0\}$  has two connected components, also $(\mathcal{SR}^*_{\R}(B),*)$ has two connected components.

    If $B$ is a  product domain, then $f$ is homotopic to $F(q, 0) = f(a + \mathcal{I}(q)b) = q_0+ \mathcal{I}(q)q_1$ with $q_0, q_1$ real. A similar argument as in the first part of the proof provides a homotopy between this map and constant $1$.
\end{proof}

\begin{proposition}\label{approx}(Compare \cite{GR}, VI.E, Lemma 2).
  Let  $B$ be a basic domain such that $\overline{B}$ is a closed basic set and $K \subset B$  a compact set. 
  Then for any $f\in \mathcal{SR}^*(B)$
  there exist ${f_{1}}, {f_{2}},\ldots, {f_{n}}\in{\mathcal{SR}^*}(B)$ such that%
  \[f={f_{1}}*{f_{2}}* \cdots * {f_{n}} \mbox{ in } B \]
  and $|{f_{j}}-1|_{\widetilde{K}}<1$ for $j=1,2,\ldots, n$.
\end{proposition}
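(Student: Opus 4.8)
The plan is to imitate the classical factorization argument (Gunning--Rossi, VI.E, Lemma 2): once the group of nonvanishing functions is known to be connected, one joins $f$ to the constant $1$ by a path, subdivides the path finely, and takes the successive ``$*$-quotients'' as the factors. Concretely, since $\overline{B}$ is a closed basic set, Proposition \ref{ConnGroup} guarantees that $(\mathcal{SR}^*(B),*)$ is connected; because the homotopies produced there are continuous in the topology of uniform convergence on compact sets, I obtain a continuous path $t \mapsto g_t$, $t \in [0,1]$, in $\mathcal{SR}^*(B)$ with $g_0 = 1$ and $g_1 = f$.

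Next I record the two facts that make the estimate work. First, $\widetilde K$ is axially symmetric and, being a union of spheres over the compact base $K$, it is compact and contained in $B$; hence Proposition \ref{regularitygp1}(d) applies with $\widetilde K$ in place of $K$. Second, the $*$-inverse is uniformly bounded along the path: by Proposition \ref{regularitygp1}(a), $g_t^{-*}$ satisfies $g_t^{-*}(g_t(q)^{-1}q\,g_t(q)) = g_t(q)^{-1}$, and since $q \mapsto g_t(q)^{-1}q\,g_t(q)$ permutes each sphere $\widetilde q$, one gets $|g_t^{-*}|_{\widetilde K} = 1/\inf_{\widetilde K}|g_t|$. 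As $(t,q)\mapsto g_t(q)$ is continuous and nonvanishing on the compact set $[0,1]\times \widetilde K$, its modulus has a positive minimum $\delta$, so $M := \sup_{t}|g_t^{-*}|_{\widetilde K} \le 1/\delta < \infty$.

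Now I choose the partition. The map $t \mapsto g_t$ is uniformly continuous from $[0,1]$ into $(C(\widetilde K),|\cdot|_{\widetilde K})$, so there is a partition $0 = t_0 < t_1 < \cdots < t_n = 1$ with $|g_{t_j} - g_{t_{j-1}}|_{\widetilde K} < 1/M$ for every $j$. I set $f_j := g_{t_{j-1}}^{-*} * g_{t_j} \in \mathcal{SR}^*(B)$. Associativity of $*$ gives the telescoping identity $f_1 * f_2 * \cdots * f_n = g_{t_0}^{-*} * g_{t_n} = f$, while the $\R$-bilinearity of $*$ together with $g_{t_{j-1}}^{-*} * g_{t_{j-1}} = 1$ yields $f_j - 1 = g_{t_{j-1}}^{-*} * (g_{t_j} - g_{t_{j-1}})$. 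Applying Proposition \ref{regularitygp1}(d) on $\widetilde K$ then gives $|f_j - 1|_{\widetilde K} \le M\,|g_{t_j} - g_{t_{j-1}}|_{\widetilde K} < 1$, as required.

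The only genuinely noncommutative point, and the step I would watch most carefully, is the ordering in the telescoping product: one must place the factors as $g_{t_{j-1}}^{-*} * g_{t_j}$ (inverse on the left) so that the intermediate terms cancel in the correct order, and one must use the multiplicative estimate of Proposition \ref{regularitygp1}(d) rather than pointwise multiplicativity, which fails for the $*$-product. The uniform bound on $|g_t^{-*}|_{\widetilde K}$ is the other place where the quaternionic setting differs from the complex one, but the formula from Proposition \ref{regularitygp1}(a) reduces it to a bound on $1/|g_t|$ exactly as above, so I do not expect any serious obstacle beyond bookkeeping.
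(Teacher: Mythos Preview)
Your proof is correct and follows the same path-subdivision idea that underlies the paper's argument; the paper simply compresses it into the abstract statement that in a (path-)connected topological group every neighbourhood of the identity generates the whole group, and then uses compactness of the homotopy image to extract finitely many factors. Your explicit choice $f_j = g_{t_{j-1}}^{-*} * g_{t_j}$ is exactly how one unwinds that principle. One small point: to bound $|g_t^{-*}|_{\widetilde K}$ you do not need the claim that $q \mapsto g_t(q)^{-1}q\,g_t(q)$ \emph{permutes} each sphere (i.e.\ is surjective); it is cleaner to use the other identity $g_t^{-*} * g_t = 1$, which by Proposition \ref{regularitygp1}(a) gives $|g_t^{-*}(q)| = 1/|g_t(\psi(q))|$ with $\psi(q)$ on the same sphere as $q$, hence $|g_t^{-*}|_{\widetilde K} \le 1/\inf_{\widetilde K}|g_t|$ directly.
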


\begin{proof} Because $\mathcal{SR}^*(B)$ is a (path) connected topological group (equipped with the topology of uniform convergence on compact sets), every neighbourhood of $1$ generates the whole group. Since   there exists a homotopy between $f \in \mathcal{SR}^*(B)$ and the constant $1$ and its image is compact,  there exist functions $f_1,\ldots f_n$ from the neighbourhood $U:=\{g, |g-1|_{\widetilde{K}}<1\}$ of $1$ so that   $f = f_1* \ldots *f_n$.
\end{proof}
\begin{remark} Proposition  \ref{approx} also holds for the interior of a finite union of disjoint closed basic sets.
\end{remark}

\begin{problem} We do not know whether a bounded slice--regular function $f$ on a basic domain $\Omega$ admits a (finite) factorization
to slice--regular functions $f_j$ satisfying $|f_j - 1|_{\Omega} < 1.$ The positive answer could be of interest to prove the contractibility of the subgroup of bounded functions in $\mathcal{SR}^*(\Omega)$.
\end{problem}

We finish this section by proving Runge-type approximation results; see also  \cite{bw} for Theorem \ref{RungeR} and  \cite{GR}, VI.E, Theorem 3, for Theorem \ref{RungeInvF}.

\begin{theorem}[Runge Theorem for $\mathcal{SR}_{\R}$ and $\mathcal{SR}$] \label{RungeR}
    Let $ U \subset \Omega$ be axially symmetric domains, $K \subset U$ a compact set and $f \in \mathcal{SR}(U)$.  If for some $I \in \mathbb S$ the set  $U_I$ is Runge in $\Omega_I$ and if $\ve > 0,$ then there exists $\tilde{f} \in \mathcal{SR}(\Omega)$ so that $|f- \tilde{f}|_K < \ve.$ If, in addition, $f \in \mathcal{SR}_{\R}(U)$, then we can also choose $\tilde{f} \in \mathcal{SR}_{\R}(U).$
\end{theorem}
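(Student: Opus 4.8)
The plan is to reduce the statement to a single slice. First I would replace $K$ by its axial symmetrization $\widetilde{K}$, which is compact and, since $U=\widetilde{U}$ is axially symmetric, still satisfies $\widetilde{K}\subset U$; then $\widetilde{K}_I:=\widetilde{K}\cap\C_I$ is a conjugation--invariant compact subset of $U_I$, and as $K\subset\widetilde{K}$ it suffices to make the approximation small on $\widetilde{K}$. Writing $f=f_0+f_1i+f_2j+f_3k$ with $f_0,\dots,f_3\in\mathcal{SR}_{\R}(U)$ as in Definition \ref{SR_simplified}, and recalling $|i|=|j|=|k|=1$, it then suffices to approximate each slice--preserving component $f_m$ uniformly on $\widetilde{K}$ by some $\tilde{f}_m\in\mathcal{SR}_{\R}(\Omega)$ and to set $\tilde{f}:=\tilde{f}_0+\tilde{f}_1i+\tilde{f}_2j+\tilde{f}_3k\in\mathcal{SR}(\Omega)$. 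This also settles the ``moreover'' claim, since a slice--preserving $f$ equals $f_0$ and the construction returns $\tilde{f}\in\mathcal{SR}_{\R}(\Omega)$.

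For the core step I would fix a slice--preserving $g\in\mathcal{SR}_{\R}(U)$ and restrict it to the slice $U_I$, where it is a Schwarz symmetric holomorphic function $g|_{U_I}\colon U_I\to\C_I$. Since $U_I$ is Runge in $\Omega_I$, there is $h\in\mathcal{O}(\Omega_I)$ with $|g-h|_{\widetilde{K}_I}<\delta$, for a $\delta>0$ to be fixed at the end. As $h$ need not be Schwarz symmetric, I would symmetrize it: because $\Omega_I$ is invariant under conjugation (it is the slice of the axially symmetric $\Omega$), the map $z\mapsto\overline{h(\bar{z})}$ is again holomorphic on $\Omega_I$, so $\tilde{h}(z):=\tfrac12\big(h(z)+\overline{h(\bar{z})}\big)$ is Schwarz symmetric and holomorphic on $\Omega_I$; using $g(\bar{z})=\overline{g(z)}$ together with the conjugation--invariance of $\widetilde{K}_I$ one checks $|g-\tilde{h}|_{\widetilde{K}_I}\le|g-h|_{\widetilde{K}_I}<\delta$, so symmetrizing costs nothing. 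By the Remark following Definition \ref{SR_simplified}, $\tilde{h}$ extends uniquely to $\tilde{g}\in\mathcal{SR}_{\R}(\Omega)$ with $|g-\tilde{g}|_{\widetilde{K}_I}<\delta$.

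It remains to transfer this slice estimate to all of $\widetilde{K}$. Here I would exploit that a slice--preserving function has constant modulus on each sphere: if $g$ is slice--preserving then $g(x+Jy)=u(x,y)+Jv(x,y)$ with $u,v$ real, whence $|g(x+Jy)|^2=u^2+v^2$ is independent of $J\in\bS$. Applying this to the slice--preserving differences $f_m-\tilde{f}_m$, and noting that every sphere meeting $\widetilde{K}$ meets $\widetilde{K}_I$ (in the points $x\pm Iy$), we obtain $|f_m-\tilde{f}_m|_{\widetilde{K}}=|f_m-\tilde{f}_m|_{\widetilde{K}_I}<\delta$; the triangle inequality then gives $|f-\tilde{f}|_{\widetilde{K}}\le\sum_{m=0}^{3}|f_m-\tilde{f}_m|_{\widetilde{K}}<4\delta$. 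Choosing $\delta=\ve/4$ (or $\delta=\ve$ in the slice--preserving case, where only one component occurs) yields $|f-\tilde{f}|_K\le|f-\tilde{f}|_{\widetilde{K}}<\ve$. I expect the only genuinely delicate point to be the symmetrization step, where one must keep the approximant holomorphic on the whole of $\Omega_I$---precisely the place where the conjugation--invariance of $\Omega_I$, i.e.\ the axial symmetry of $\Omega$, is indispensable---while not degrading the estimate; once the components are slice--preserving, the passage from the single slice $\widetilde{K}_I$ to the full axially symmetric set $\widetilde{K}$ is automatic.
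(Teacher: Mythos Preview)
Your proof is correct and follows essentially the same approach as the paper: reduce to slice--preserving components via $f=f_0+f_1i+f_2j+f_3k$, apply the classical Runge theorem on the slice $U_I\subset\Omega_I$, symmetrize the approximant by $\tilde h(z)=\tfrac12(h(z)+\overline{h(\bar z)})$ to make it Schwarz symmetric, and extend back to $\Omega$. Your write--up is actually more complete than the paper's, which leaves implicit both the replacement of $K$ by $\widetilde K$ and the passage from the slice estimate on $\widetilde K_I$ to the full estimate on $\widetilde K$ via the constant--modulus--on--spheres property of slice--preserving functions.
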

\begin{proof}
 Assume that $f$ is slice--preserving. The restriction  $f|_{U_I}$  is holomorphic, then, by classical results, it can be approximated on ${K_I}$ by a holomorphic function  ${g} \in \mathcal{O}(\Omega_I)$ as well as we wish. The function $\tilde{f}(z):=(g(z) + \overline{g(\bar{z})})/2$  also approximates $f|_{U_I}$ on $K_I$ and extends to a slice--preserving function on $\Omega$. Because by definition of slice--regularity, for $f \in \mathcal{SR}(U)$ we have $f = f_0 + f_1i + f_2j + f_3k$ with $f_0,\ldots,f_3$ slice--preserving, the Runge theorem also holds for slice--regular functions.
\end{proof}
\begin{remark}\label{remark37} Because in $\C$, any holomorphic function defined on an open set containing a finite union $V$ of disjoint closed discs in $\C$ can be approximated uniformly on $V$ by entire functions and hence by holomorphic polynomials,  also any slice--regular function defined on an open neighbourhood of a finite  union $V$ of disjoint closed basic sets  can be approximated uniformly on $V$ by slice--regular polynomials. 
\end{remark}

\begin{theorem}[Runge theorem for $\mathcal{SR}^*$]\label{RungeInvF}
Let $K$ be a union of finitely many disjoint closed basic sets in $\H$ and let $f\in \mathcal{SR}^*(\Omega)$ with $\Omega$ an open axially symmetric neighbourhood of $K$.
Then, for any $\varepsilon>0$, there exists $g\in {\mathcal{SR}^*}(\H)$ such that
\[ |f-g|_{K}<\varepsilon.\]
\end{theorem}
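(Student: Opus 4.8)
The plan is to follow the classical Oka–Weil / Runge scheme for invertible functions, as in Gunning–Rossi VI.E, Theorem 3, adapted to the slice--regular setting by means of the preceding results. First I would reduce to the local multiplicative form: since $K$ is a finite union of disjoint closed basic sets and $f \in \mathcal{SR}^*(\Omega)$, I may pass to a slightly smaller neighbourhood and apply Proposition \ref{approx} (together with the Remark following it, which extends the factorization to finite disjoint unions of closed basic sets). This yields a factorization $f = f_1 * f_2 * \cdots * f_n$ on a neighbourhood of $K$ with $|f_j - 1|_{\widetilde K} < 1$ for each $j$; the point is that each factor is now close to $1$ and therefore possesses a slice--regular logarithm on a neighbourhood of $K$.

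Next I would pass to the logarithms. Because $|1 - f_j|_{\widetilde K} < 1$, the Remark following Proposition \ref{regularitygp1} guarantees that $\log_*(f_j)$ is a well-defined slice--regular function on a neighbourhood of $K$, with $\exp_*(\log_* f_j) = f_j$. Thus the \emph{multiplicative} approximation problem for $f_j$ has been converted into an \emph{additive} approximation problem for the genuinely slice--regular function $h_j := \log_* f_j$. I would then invoke the additive Runge theorem, Theorem \ref{RungeR} (in its $\mathcal{SR}$ form), applied with $\Omega = \H$: on each connected component of $K$ the ambient is a basic set, so the relevant slice $K_I$ sits inside a simply connected domain and is trivially Runge in $\C_I$, which lets me approximate $h_j$ uniformly on $K$ by some $\tilde h_j \in \mathcal{SR}(\H)$ (indeed by a slice--regular polynomial, by Remark \ref{remark37}), as closely as desired. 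Exponentiating, $g_j := \exp_*(\tilde h_j) \in \mathcal{SR}^*(\H)$ is globally nonvanishing and, by continuity of $\exp_*$ and of the $*$-product on compacts (Proposition \ref{regularitygp1}(d)), is uniformly close to $f_j$ on $K$.

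Finally I would reassemble: set $g := g_1 * g_2 * \cdots * g_n \in \mathcal{SR}^*(\H)$, which is nonvanishing since each $g_j$ is. Using the submultiplicative estimate $|u * v|_K \le |u|_K |v|_K$ from Proposition \ref{regularitygp1}(d), together with the telescoping identity
\[
 f - g = \sum_{m=1}^{n} f_1 * \cdots * f_{m-1} * (f_m - g_m) * g_{m+1} * \cdots * g_n,
\]
I obtain $|f - g|_K \le \sum_{m} C_m |f_m - g_m|_K$ for constants $C_m$ depending only on the (fixed) uniform norms of the factors on $K$; choosing each $|f_m - g_m|_K$ small enough then forces $|f - g|_K < \varepsilon$.

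\textbf{Main obstacle.}
The delicate point is not the bookkeeping but making sure the logarithm step is legitimate in the noncommutative $*$-algebra: the identity $\exp_*(\log_*(1-f)) = 1-f$ is available only under the norm bound $|1-f| < 1$, which is precisely why the factorization of Proposition \ref{approx} with $|f_j - 1|_{\widetilde K} < 1$ is indispensable, and why one cannot simply take a single global logarithm of $f$ (the Remark after Proposition \ref{regularitygp1} explicitly warns that nonvanishing functions on balls in $\H$ may fail to have slice--regular logarithms). Care is also needed that the $*$-product of logarithms does \emph{not} correspond to the sum in general, so the argument must keep each factor $f_j$ separate, approximate its logarithm individually, and recombine multiplicatively rather than collapsing to a single additive problem; the axial symmetry of $\widetilde K$ and the order-of-intersection hypotheses do not enter here, so the proof stays purely on the level of one basic set at a time.
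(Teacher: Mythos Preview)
Your proposal is correct and follows essentially the same route as the paper's proof: factor $f=f_1*\cdots*f_n$ via Proposition~\ref{approx} (extended to finite disjoint unions by the Remark following it), take $h_j=\log_* f_j$, approximate each $h_j$ on $K$ by slice--regular polynomials via Remark~\ref{remark37}, exponentiate to get $g_j$, and multiply. Your write-up is in fact more explicit than the paper's, which simply refers to \cite{GR} for the final telescoping/submultiplicative estimate that you spell out.
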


\begin{proof}
  By Proposition \ref{approx}, for $j=1,\ldots, n$,
  one can define $h_{j}:=\log ({f_{j}})$ on a  suitable neighbourhood $U$ of $K$ in $\Omega$.
By Remark  \ref{remark37} there exist slice--regular polynomials $p_{j}$  such that
$|h_{j}-p_{j}|_K$ can be made as small as desired.  Following the proof in \cite{GR} we see that the function  $g:=g_{1}* g_{2} *\cdots * g_{n},$ where $g_{j}=:\exp_*(p_{j}),$ $j=1,\ldots, n,$ fulfills the requirements of the theorem.
\end{proof}

\section{Cartan coverings}\label{coverings}

To proceed towards  Cousin problems in the framework of slice--regular functions, one
needs to define a special type of axially symmetric coverings of
axially symmetric open sets $\Omega \subset \H;$ without loss of
generality we will assume that $\Omega$ is an axially symmetric
domain.
 We have seen in the previous section that for good approximation properties, the sets in question have to be finite disjoint unions of closed basic sets.

The assumptions on symmetry allow us to construct the open covering in the complex plane and then extend it to quaternions by symmetrization. The construction
in the plane resembles, in part, the construction of Lebesgue; Lebesgue's `bricks' are symmetric tiles in our setting, with additional properties for the tiles intersecting the real axis. Moreover, to be able to proceed inductively, the elements of the covering have to be ordered in such a way that for any $n$ 
the intersection of the $n$-th tile with the union of the previous ones is a finite union of disjoint closed basic sets with piecewise smooth boundaries.

\subsection{Symmetric  exhaustions and symmetric coverings}

A symmetric covering of a symmetric set is defined as expected.
\begin{definition} Let $D \subset \C$ be a  symmetric  open set and
  let $\mathcal {D}=\{D_{\lambda}\}_{\lambda \in\Lambda}$
  be an open
covering of $D.$ The covering $\mathcal{D}$
is  a
{\em  symmetric open
  covering} of $D$ if each $D_{\lambda} \in {\mathcal D}$ is
symmetric.
\end{definition}

We recall the following
\begin{definition}
  Given any  open set  $D \subset \C$,
    the sequence $\{K_n\}_{n \in
      \bN_0}$ of compact sets is called an {\em exhaustion of $D$ with
      compact sets} if $K_n \Subset K_{n+1}$ and $\cup_{n \in \bN_0} K_n
    = D.$
    If the set $D$ and the sets $\{K_n\}_{n \in \bN_0}$ are symmetric, then the exhaustion is called
    a {\em symmetric exhaustion} of $D.$  If, moreover, $\{K_n\}_{n \in \bN_0}$ are Runge in $D$, the exhaustion is called
    a {\em symmetric Runge exhaustion} of $D.$
\end{definition}

\noindent{\bf Notation.} Let $K \subset \C$ be homeomorphic to a closed $k$-annulus. Then there exists a finite family of disjoint
closed topological discs $D_1,\ldots, D_k$, called {\em holes}, with interiors disjoint from $K$ so that {\em the filled $K$,} $K^{\bullet}:=K \cup (\cup_{j=1}^k D_j)$ is simply connected.
If $K = K_1 \cup \ldots \cup K_m$ is a union of disjoint compact sets $K_j,$ each one homeomorphic to a closed $k_j$-annulus, then the filled $K$ is  $K^{\bullet} = K_1^{\bullet} \cup \ldots \cup K_m^{\bullet}.$  We also set $\varnothing^{\bullet} :=\varnothing.$

\begin{proposition}\label{SymCov} Let $D \subset \C$ be a symmetric set and $K \subset D$ a symmetric compact set with smooth boundary, that is Runge in $D$. Then there exists a symmetric Runge exhaustion of $D$ with compact sets $\{K_n\}_{n \in \bN_0}$ such
that $\partial K_n$ is smooth for each $n$ and $K_0 = K$ if $K \ne \varnothing.$ If $K = \varnothing,$ then we choose the set $K_0 \subset D$
 to be either a closed symmetric topological disc or $K_0 = D \cup R(D),$ where $D$ is a closed topological disc in the upper half-plane.
\end{proposition}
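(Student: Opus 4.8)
The plan is to realize the whole exhaustion as \emph{filled regular sublevel sets of a single smooth symmetric exhaustion function}, so that symmetry, smoothness and the Runge property are all controlled simultaneously. First I would produce a smooth \emph{symmetric} exhaustion function: starting from any smooth proper $\rho_0\colon D\to[0,\infty)$ (these exist for every open $D\subset\C\cong\R^2$), I set $\rho(z):=\rho_0(z)+\rho_0(\bar z)$. Since $D$ is symmetric, $\bar z\in D$ and $\rho$ is a well-defined smooth function with $\rho(\bar z)=\rho(z)$; moreover $\rho\ge\rho_0$, so every sublevel set $\{\rho\le c\}$ is a compact subset of $D$. By Sard's theorem the regular values of $\rho$ are dense in $[0,\infty)$, so I may choose an increasing sequence of regular values $c_1<c_2<\cdots\to\infty$ with $c_1$ large enough that $K\subset\{\rho<c_1\}$ (possible because $K$ is compact in $D$). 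Put $L_n:=\{\rho\le c_n\}$. Each $L_n$ is compact and symmetric, its boundary $\{\rho=c_n\}$ is a disjoint union of finitely many smooth closed curves, and $L_n\subset\{\rho<c_{n+1}\}\subset\mathrm{int}(L_{n+1})$, so $L_n\Subset L_{n+1}$ and $\bigcup_n L_n=D$. By the observation following Definition~\ref{sym}, at a point of $\{\rho=c_n\}\cap\R$ the gradient of $\rho$ is a nonzero real multiple of $(1,0)$, so the boundary curves meet the real axis transversally.

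Next I would enforce the Runge property by filling holes. For a compact $L\subset D$ let $\widehat L$ denote the union of $L$ with all connected components of $D\setminus L$ that are relatively compact in $D$; this is the smallest compact set containing $L$ that is Runge in $D$. Set $K_n:=\widehat{L_n}$ for $n\ge 1$ and $K_0:=K$. Filling holes does not affect symmetry: because $D$ is symmetric, conjugation permutes the components of $D\setminus L_n$ and preserves relative compactness in $D$, so $\widehat{L_n}$ is symmetric. It does not affect smoothness either, since the boundary of a filled set is contained in the boundary of the original set, whence $\partial K_n\subset\{\rho=c_n\}$ is again a union of smooth closed curves. Each $K_n$ is Runge in $D$ by construction, while $K_0=K$ is Runge, symmetric and smooth by hypothesis. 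Finally $\bigcup_n K_n\supset\bigcup_n L_n=D$ and $K_n\subset D$, so the $K_n$ exhaust $D$; and $K_0\Subset K_1$ holds because $K\subset\{\rho<c_1\}\subset\mathrm{int}(K_1)$.

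The one point that needs genuine care, and which I expect to be the main obstacle, is that hole-filling preserves \emph{strict} nesting, i.e. $\widehat{L_n}\Subset\widehat{L_{n+1}}$. Monotonicity $\widehat{L_n}\subset\widehat{L_{n+1}}$ is immediate from the definition, but the interior containment I would establish by contradiction. A boundary point $z$ of $\widehat{L_{n+1}}$ lies on $\partial L_{n+1}=\{\rho=c_{n+1}\}$, so $\rho(z)=c_{n+1}$ forces $z$ to lie in a filled hole $W$ of $L_n$ (a relatively compact component of $D\setminus L_n$) rather than in $L_n$ itself, since $\rho\le c_n$ on $L_n$. Any point near $z$ lying outside $\widehat{L_{n+1}}$ would lie in a component $V$ of $D\setminus L_{n+1}$ that is \emph{not} relatively compact in $D$; but $V$ is connected and meets $W$, and $V\subset D\setminus L_{n+1}\subset D\setminus L_n$, so $V\subset W$, giving $\overline V\subset\overline W\subset D$ compact, a contradiction. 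Hence $\widehat{L_n}\subset\mathrm{int}(\widehat{L_{n+1}})$, as required.

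For the case $K=\varnothing$ I would simply take $K_0$ to be a symmetric closed topological disc with smooth boundary when $D\cap\R\ne\varnothing$, and $K_0=E\cup\refl{E}$ with $E$ a smooth closed topological disc in the upper half-plane otherwise; in either case $K_0$ is symmetric, has smooth boundary and connected complement, hence is Runge in $D$, and the construction above applies verbatim with this choice of $K_0$ (choosing $c_1$ so that $K_0\subset\{\rho<c_1\}$).
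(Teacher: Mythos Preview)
Your argument is correct and takes a genuinely different route from the paper's. The paper exploits complex analysis: it builds a \emph{strictly subharmonic} symmetric exhaustion function $m$ (by symmetrizing a subharmonic Morse exhaustion, treating the cases $D\cap\R=\varnothing$ and $D\cap\R\ne\varnothing$ separately), so that the sublevel sets $m^{-1}([0,r_n])$ are automatically holomorphically convex in $D$ and hence Runge, with no hole--filling needed. Your approach is purely smooth/topological: you symmetrize an arbitrary smooth proper function, take regular sublevel sets, and then \emph{force} the Runge property by passing to the hull $\widehat{L_n}$, checking by hand that hole--filling preserves symmetry, smooth boundary, and strict nesting. What the paper's method buys is brevity and a direct link to the Stein picture (Runge comes for free from subharmonicity); the price is the small unargued claim that one can arrange the unique minimum of the exhaustion to sit on the real axis. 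What your method buys is that it is entirely elementary---no subharmonic functions, no Stein theory, a single argument covering both cases---at the cost of the extra verification that $\widehat{L_n}\Subset\widehat{L_{n+1}}$, which you carry out correctly. Either approach is perfectly adequate for how the proposition is used downstream.
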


\begin{remark} Since $ K_n$ is compact with a smooth boundary, it has a finite number of connected components. For each connected component $K$ of $K_n$, there exists a $k\in \bN_0$ so that  $K$ is homeomorphic to a closed $k$-annulus.
\end{remark}

\begin{proof}

Consider first the case when $D \cap \R = \varnothing$  and assume that $K =\varnothing$. Then the
set $D^+:= D \cap \C^+$ is Stein, and by classical results, there exists a strictly
subharmonic exhaustion function $m : D^+ \ra [0,\infty)$ with
  only nondegenerate critical points and with a global minimum $0$
  attained at precisely one point. Since the set of regular values is an open set, there exists a strictly increasing sequence of regular values numbers $\{r_n\}_{n \in \bN_0}.$  Set $K^+_n:= m^{-1}((-\infty, r_n]).$ Then the sets $K_n:=K^+_n \cup \refl{K^+_n}$  have all the desired properties provided $r_0$ is small enough.

  If $K = K_0 \ne \varnothing$ is already given, then we choose $r_1$ to be such that $K_0 \Subset K_1$.

 If $D \cap \R \ne \varnothing,$  then let $m_1: D \ra [0,\infty)$
   be a strictly subharmonic exhaustion function with only nondegenerate critical points and with a global minimum $0$
  attained at precisely one point, $m_1^{-1}(0) = (x_0,0).$ Then the function $m: D \ra [0,\infty)$ defined by $m(x,y) = m_1(x,y) + m_1(x,-y)$ is  a symmetric  strictly subharmonic exhaustion function with $m^{-1}(0) = (x_0,0)$ a global minimum and
 $(x_0,0)$ a nondegenerate critical point.
    If $\{r_n\}_{n \in \bN_0}$ is  a strictly increasing sequence of regular values, then the sets
  $K_n:=m^{-1}([0, r_n]), n \in \bN_0$ are Runge in $D$. If $K = K_0$ is already given, then we choose $r_1$ to be such that $K_0 \Subset K_1$. If $K_0$ is not given, we choose $r_0$ to be close enough to $0$ and then $m^{-1}([0,r_0])$ is a closed symmetric topological disc.

%
  \end{proof}

\subsection{Axially symmetric  coverings and symmetric tilings}

In this section, we introduce the coverings of axially symmetric domains we are looking for induced by tilings.

\begin{definition}\label{asop} Let $\Omega \subset \H$ be an axially symmetric domain and
  let $\mathcal {U}=\{U_{\lambda}\}_{\lambda \in\Lambda}$
  be an open
covering of $\Omega.$ The covering $\mathcal{U}$
is  an
{\em axially symmetric open
  covering} if each $U_{\lambda} \in {\mathcal U}$ is an axially
symmetric open set.

A covering $\mathcal{U}$
of $\Omega$ is a {\em basic covering} if each $U_{\lambda}
\in {\mathcal U}$ is a basic  set.

An indexed family $\mathcal{U}'=\{U'_{\lambda'}\}_{\lambda' \in\Lambda'}$ of subsets of $\Omega$
is said to be {\em subordinated} to the covering $\mathcal{U}$ of $\Omega$
if for each $U'_{\lambda'}\in\mathcal{U}'$ there exists $U_{\lambda} \in \mathcal U$ such that
$U'_{\lambda'} \subset U_{\lambda}$.\\
\end{definition}
If  $\Omega \subset \H$ is an axially symmetric domain of $\mathbb{H}$,
$\mathcal{U} =\{U_{\lambda}\}_{\lambda \in\Lambda}$  a covering of $\Omega$,
then $\{U_{\lambda}\cap \mathbb{C}_I\}_{\lambda \in\Lambda}$ is a symmetric
covering of $\Omega_I$ which will be denoted by $\mathcal{U}_I$.
For any given indexed family $\mathcal {V}=\{V_{\lambda}\}_{ \lambda
\in\Lambda}$ of sets we indicate
their intersections by using the
following standard notation: if $(\lambda_1,\ldots,\lambda_k)
\in \Lambda^k$ is a multiindex, then
$V_{\lambda_1 \ldots \lambda_k}:= V_{\lambda_1} \cap \ldots \cap V_{\lambda_k}.$\\

Definition \ref{CartanPair} is an adaptation of {\em Cartan strings} (see Section 4 in \cite{fp} or subsections 6.7 - 6.9 in \cite{FF}) to axially symmetric domains in $\H$.

\begin{definition}[Cartan pair, Cartan string]\label{CartanPair}
Let $A, B \subset  \bH$ be axially symmetric compact sets with  slice--piecewise smooth boundaries fulfilling the {\em separation property} $\overline{(A \setminus B)} \cap \overline{(B \setminus A)} = \varnothing.$ If $B$ is a closed basic set and $A \cap B$ a finite union of disjoint closed basic sets
 then we say that the pair $(A,B)$  is a {\em Cartan pair or a Cartan $2$-string}.  A sequence  $(A_1,\ldots, A_n) $ of axially symmetric  compact sets with piecewise smooth boundaries contained in $\bH$ is a Cartan $n$-string if $(A_1,\ldots, A_{n-1})$ and $(A_1 \cap A_n,\ldots A_{n-1} \cap A_n)$ are Cartan $(n-1)$-strings and $(A_1\cup \ldots \cup A_{n-1}, A_n)$ is a Cartan pair.
\end{definition}

\begin{definition}[Cartan sequence]\label{CartanSequence}
Let $\Omega$ be an axially symmetric domain.
Let $\mathcal {A}=\{A_n\}_{n \in \bN_0}$ be a sequence of closed basic sets in $\Omega$ 
  such that
\begin{itemize}
\item[$(1)$] for all $n \in \bN_0$  the sets $A_{n}$ have slice--piecewise smooth boundary, 
 \item[$(2)$] $A_{n_1n_2}$
and  $ A_{n_1n_2n_3}$
are  
closed basic sets for distinct $n_1,n_2,n_3 $;
 \item[$(3)$]  if $A_{n_i} \cap \R \ne \varnothing$ for $i \in \{1,2\}$ with $n_1\ne n_2$ and $A_{n_1n_2} \ne \varnothing$,
 then $A_{n_1n_2}\cap \R \ne \varnothing$; 
moreover, if $A_{n} \cap \R \ne \varnothing$ there exists at most two indices $n_1 \ne n_2$ different from $n$
with $A_{n_i} \cap \R \ne \varnothing$, $i = 1,2$, such that
$A_{nn_1} \ne \varnothing$, $A_{nn_2} \ne \varnothing $; if this is the case,  then $A_{n_1n_2} =\varnothing$.
\item[$(4)$] $A_{n_1n_2n_3n_4} = \varnothing$ for distinct $n_1,n_2,n_3,n_4$;
\item[$(5)$] for each $n \in \bN$, the sequence $(A_0,\ldots,A_{n})$ is a Cartan $n+1$-string.
\end{itemize}
Then, we define such a sequence $\mathcal{A}$  to be
a {\em Cartan sequence in $\Omega$.}

 Let $\mathcal {U}=\{U_{\lambda}\}_{ \lambda \in \Lambda}$ be an 
axially symmetric open covering of $\Omega$,
let $Z \subset \Omega$ be a discrete set of points or spheres $S=\{x+Iy\ :\ I\in\mathbb{S} \}$ and $\mathcal{A}$  a
 Cartan sequence in $\Omega$. If each $A_n$ is contained in  an open set $U_{\lambda} \in \mathcal U$ and $A_{n_1n_2} \cap Z = \varnothing$ for $n_1 \ne n_2$, then we say that $\mathcal{A}$ is a {\em Cartan sequence subordinated to the pair $({\mathcal U},Z)$.} 
 If, in addition, 
 the  sets in the sequence ${\mathcal{B}}:=\{B_n:=\mathring{A}_n, n \in \bN_0\}$ form a covering of $\Omega,$ then we call the sequence $\mathcal{B}$ a {\em Cartan covering } subordinated to  $({\mathcal U},Z)$.\\
\end{definition}
Notice that, being basic, a set $\mathring{A}_n \in \mathcal {B}$ intersects at most one connected component of  $\Omega \cap \R$.\\

For the reader familiar with cohomology groups with values in a sheaf, let us briefly explain the reasons for chosing coverings with the listed properties.
As mentioned, the set of real points in a domain $\Omega \subset \H$ plays a different role than nonreal quaternions. For example, we have seen in Proposition \ref{ConnGroup} that the group of  nonvanishing slice--preserving functions on a basic slice domain, which is the interior of a closed basic set, has two components. In contrast, on a basic product domain, it has one component. In particular, there is no quaternionic logarithm of $-1$ in the class of slice--preserving functions on a basic slice domain (\cite{AdF2, GPV}).  Condition $(3)$ says that the covering has order $2$ when restricted to the reals  and that the nonempty intersection of  two  sets  that intersect the real axis is connected and also intersects the real axis  but all such double intersections are disjoint;  when dealing with cocycles of slice--preserving  nonvanishing  functions, this enables us to choose representatives of a cocycle in the connected component of $1$, where slice--preserving logarithm exists (\cite{AdF2, GPV}). For similar reasons related to the existence of logarithm, we require in $(2)$ that  $A_{n_1n_2}, A_{n_1n_2n_3}$ for distinct $n_1,n_2,n_3$ are closed basic sets and  the intersections $(A_1\cup \ldots \cup A_{n-1}) \cap A_n$ consist of a finite number of disjoint closed basic sets;  namely on such sets one has the possibility of finding slice--regular logarithmic functions.  Because the topology of an axially symmetric domain $\Omega$ is determined by the topology of $\Omega_I$, 
we require that the covering reflects this fact: conditions $(2)$ and $(4)$ imply that  the nerve of the covering is planar. The requirement $(5)$, among other things,
says that besides $(A_0,\ldots, A_{n-1})$ also $(A_1 \cap A_n,\ldots A_{n-1} \cap A_n)$ is a Cartan string and this allows us to work with higher cohomology groups. Sometimes  certain subclasses of slice--regular functions intrinsically determine a discrete set $Z$ of points and spheres,  which have to be avoided,  therefore, we require that $Z \cap A_{n_1n_2} = \varnothing$.\\


The assumption of axial symmetry for the sets considered enables us to search
for such coverings by restricting the problem to (any) slice $\C_I$.
Recall that if $B \subset \H$ is a basic set, then for each $I \subset
\bS,$ the set $B_I$ is simply connected if $B$ is a slice domain. If
$B$ is a product domain then $B \cap \R = \varnothing$ and $B_I =
B_{I}^+ \cup B_{I}^-$ is a  union of two disjoint simply connected open
sets. In particular, the sets $B_I$ are always symmetric in the sense
of Definition \ref{sym}.

The idea is to define a fine enough symmetric grid $\Gamma$ in
$\Omega_I,$ $\Gamma \cap {(\widetilde Z)}_I = \varnothing,$
such that the
regions cut by the grid define a tiling $\mathcal{T}$ of the set
$\Omega_I,$ subordinated to $\mathcal {U}_I,$ with the tiles being
closed topological discs (or symmetric pairs of such) with piecewise
smooth boundaries satisfying $T_{l_1l_2l_3} \cap \R = \varnothing$ and $
T_{l_1l_2l_3l_4} = \varnothing.$ The tiles, listed in the correct order,
define a sequence of compact sets $\{T_l\}_{l\in \bN}$ and the
symmetrizations (in $\H$) of their suitable  open tubular neighbourhoods
with piecewise smooth boundaries give the desired Cartan covering
subordinated to ${\mathcal U}.$

\begin{remark}
The requirement that the grid misses the discrete set ${(\widetilde Z)}_I$ of points and spheres is
easy  to  achieve  by  locally   perturbing  the  grid.  Small  enough symmetric
perturbations do  not destroy other properties.  Therefore, it suffices
to  construct   a  grid  such   that  the  tiles  fulfill   all  other
requirements, i.e. from now on $Z = \varnothing.$
\end{remark}

Let us first define precisely what a symmetric tiling  of a symmetric set in $\C_I$ is.

\begin{definition}[Tiling]\label{def_tiling}
Let $K \subset \C$ be a compact symmetric set with a piecewise smooth
boundary and $\mathcal{U}$ a symmetric open covering of $K$.  {\it A
  symmetric tiling $\mathcal{T}$ of $K$ subordinated to $\mathcal{U}$}
is a finite sequence $\mathcal{T} = (T_0,T_1,\ldots, T_m)$ of
symmetric closed sets with piecewise smooth boundaries and disjoint
interiors such that for each $T_l$ there exists $U \in \mathcal U$ and
$T_l \subset U$ and the following holds:
\begin{trivlist}{}{}
\item[](i) $\bigcup\limits_{l=0}^m T_l = K;$
\item[](ii) each $T_l$  and is either a closed
  topological disc (if it intersects $\R$) or $T_l = D \cup R(D)$ with $D \subset \C^+$
   a closed topological disc (if $T_l$ does not intersect $\R$);
  moreover, it intersects the union of the previous tiles only in
  boundary points, i.e. for each $ l =
  1,\ldots,m$, then we have $T_l \cap (T_0 \cup \ldots \cup T_{l-1}) =
  \partial T_l \cap \partial (T_0 \cup \ldots \cup T_{l-1})$ and this set is either empty or  a
  finite union of disjoint piecewise smooth closed arcs; the set $T_{l_1l_2} , l_1 \ne l_2,$ if not empty, is either  a symmetric closed arc or
  $T_{l_1l_2} = \gamma_1 \cup R(\gamma_1)$ with
  the arc $\gamma_1 \subset \C^+$, 
\item[](iii) for $0 \leq l_1 < l_2 < l_3 \leq m$  each set of the form $T_{l_1l_2l_3} $  or $T_{l_1l_2}\cap \partial K $, if not empty, consists of a pair of symmetric nonreal
points which are called the {\em vertices} of the tiling and  $T_{l_1l_2l_3l_4} = \varnothing $ for $0 \leq l_1 < l_2 < l_3 < l_4 \leq m$;
\item[](iv) if $T_{l} \cap \R \ne \varnothing $, then  $\partial T_{l} \cap \R $ consists of two points;
  moreover, there exists at most two tiles $T_{l_1}, T_{l_2}$
 which  intersect the real axis and also $T_{l}$; in this case $T_{l_1 l_2} =
  \varnothing.$
\end{trivlist}

If $D \subset \C$ is a symmetric open set and $\mathcal {U}$ a
symmetric open covering of $D$, then a {\em symmetric tiling
  $\mathcal{T}$ of $D$ subordinated to
  $\mathcal {U}$} is an infinite sequence $\mathcal{T} = (T_0,T_1,
\ldots )$ such that for each $m,$ the sequence
 $\mathcal{T}^m:=(T_0,\ldots, T_m)$ is a symmetric tiling of the
compact symmetric set $T_0 \cup \ldots \cup T_m$, subordinated to
$\mathcal{U}$, and fulfills also the condition $\bigcup\limits_{l=0}^{\infty} T_l =
D.$

If $K \subset \C_I^+$ then the {\em tiling $\mathcal{T^+}$} of $K$ is
defined in the same manner as the symmetric set's tiling but with the requirement for the symmetry dropped (and analogously for $D \subset \C_I^+$).

 Given $\delta > 0,$ a tiling is a {\em $\delta$-tiling}, if the diameters of the connected components of the tiles are less than $\delta.$
\end{definition}

\begin{remark} The tiles which intersect the real axis form
  a `chain' of closed topological discs which covers the set $D \cap \R.$ Their intersections with the real axis are
   bounded closed intervals.
\end{remark}

\begin{remark}\label{EasyTiling} If $K \subset \C_I^+,$ and we have a tiling ${\mathcal{T}}^+ = (T_0,\ldots,T_m)$ of $K$ fulfilling all
  the conditions except the requirement that the tiles are symmetric,
  then $\mathcal{T}:= (T_0 \cup \refl{T_0},\ldots,T_m\cup
  \refl{T_m})$ is a symmetric tiling of $K \cup \refl{K}.$
\end{remark}

In general, the union of tilings of two sets with disjoint interiors is
not a tiling of the union of these sets.
\begin{definition} Let $A, B$ be two compact sets and ${\mathcal T}_A = (T_{A,0},\ldots, T_{A,j})$, ${\mathcal T}_B=(T_{B,0},\ldots, T_{B,k})$ their tilings, then
\[{\mathcal T}_A \underrightarrow{\cup} {\mathcal T}_B:=
(T_{A,0},\ldots, T_{A,j}, T_{B,0},\ldots, T_{B,k}).\] If this is tiling of $A \cup B$ then
we say that we have {\em extended the tiling} from the set $A$ to the set $A
\cup B$ and that the tiling ${\mathcal T}_A \underrightarrow{\cup} {\mathcal T}_B$ is {\em an extension} of the tiling ${\mathcal T}_A$ with the tiling ${\mathcal T}_B.$
\end{definition}

\noindent Proposition \ref{tiling1} represents  a part of Theorem \ref{BasicCovering}.
\begin{proposition}\label{tiling1} Let $\Omega \subset \H$ be an axially symmetric domain, together with $\mathcal{U} = \{U_n\}_{ n \in \bN}$, a locally finite axially symmetric
  covering of $\Omega$ and $\mathcal{T}$,  a symmetric tiling of $\Omega_I$ subordinated to $\mathcal{U}_I$ for some $I \in \mathbb S$.  Then $\mathcal T$ generates a Cartan covering $\mathcal{B}$ of $\Omega$ subordinated to ${\mathcal U}$.
\end{proposition}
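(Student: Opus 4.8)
The plan is to build the sets of $\mathcal{B}$ as axial symmetrizations of thin tubular neighbourhoods of the tiles, transferring every requirement of Definition \ref{CartanSequence} from its planar counterpart in Definition \ref{def_tiling} through the symmetrization. Working in the slice $\mathbb{C}_I$ on which $\mathcal{T}=(T_0,T_1,\ldots)$ lives, for each tile pick an index $n(l)$ with $T_l\subset (U_{n(l)})_I$, which exists since $\mathcal{T}$ is subordinated to $\mathcal{U}_I$. For each $l$ I would produce a symmetric closed tubular $\ve_l$-neighbourhood $V_l\supset T_l$ in $\mathbb{C}_I$ with piecewise smooth boundary: a symmetric closed topological disc when $T_l\cap\R\ne\varnothing$, and $V_l=W_l\cup\refl{W_l}$ with $W_l\subset\C_I^+$ a closed disc otherwise. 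Each $V_l$ is obtained by smoothing the finitely many corners of $\partial T_l$ and thickening outward through the normal collar of the smoothed boundary curve (Lemma \ref{difeo}), keeping $V_l\subset (U_{n(l)})_I$, which costs nothing as $T_l$ is compact and $(U_{n(l)})_I$ open. Set $A_n:=\widetilde{V_n}$ and $B_n:=\mathring{A}_n$. The essential bookkeeping is that for symmetric subsets of $\C_I$ symmetrization commutes with finite intersections, $\widetilde{V_{l_1}}\cap\cdots\cap\widetilde{V_{l_k}}=\widetilde{V_{l_1}\cap\cdots\cap V_{l_k}}$, because a point $x+Jy$ with $y\ge 0$ lies in $\widetilde V$ exactly when the single point $x+Iy$ lies in the symmetric set $V$. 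Thus $A_{n_1\ldots n_k}=\widetilde{V_{n_1\ldots n_k}}$, and under the dictionary --- $\widetilde V$ is a closed basic set iff $V$ is a symmetric disc or a symmetric pair of discs off $\R$; $\widetilde V\cap\R\ne\varnothing$ iff $V\cap\R\ne\varnothing$; $\partial(\widetilde V)$ is slice--piecewise smooth iff $\partial V$ is piecewise smooth --- the whole statement reduces to planar properties of the ordered family $(V_0,V_1,\ldots)$.

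Next I would fix the radii $\ve_l$ compatibly. The tiling is locally finite, since the partial unions $T_0\cup\cdots\cup T_m$ exhaust $\Omega_I$, so each compact subset of $\Omega_I$ meets only finitely many tiles; combined with local finiteness of $\mathcal{U}$ this lets me take the $\ve_l$ decreasing fast enough that thickening reproduces the nerve of $\mathcal{T}$. Concretely I require $\ve_l$ below a fixed fraction of $\dist(T_l,T_{l'})$ for the finitely many nearby $l'$ with $T_l\cap T_{l'}=\varnothing$, so that $V_l\cap V_{l'}=\varnothing$ whenever $T_l\cap T_{l'}=\varnothing$; and thin enough near each edge and vertex that $V_{l_1l_2}$ collars the shared arc $T_{l_1l_2}$ as a symmetric disc or pair, $V_{l_1l_2l_3}$ is a symmetric disc or pair around the vertex $T_{l_1l_2l_3}$, and $V_{l_1l_2l_3l_4}=\varnothing$. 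This is the standard fact that a sufficiently thin regular neighbourhood of a locally finite piecewise smooth cell structure realizes its nerve; taken transverse and thin the collars also give the separation property of Definition \ref{CartanPair}. Subordination is then immediate, $A_n=\widetilde{V_n}\subset\widetilde{(U_{n(l)})_I}=U_{n(l)}$; and since $T_l\subset\mathring{V_l}$ and axial symmetrization sends symmetric open planar sets to open subsets of $\H$, we get $\widetilde{T_l}\subset B_n$, whence $\bigcup_n B_n\supset\widetilde{\bigcup_l T_l}=\Omega$, so $\mathcal{B}$ covers $\Omega$. By the reduction to $Z=\varnothing$ nothing more is needed for $Z$.

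With the dictionary and nerve preservation, conditions $(1)$, $(2)$, $(4)$ of Definition \ref{CartanSequence} are read off at once: $(1)$ from the piecewise smooth $\partial V_n$, $(2)$ from $V_{n_1n_2}$ and $V_{n_1n_2n_3}$ being the edge and vertex collars, hence symmetric discs or pairs, and $(4)$ from $V_{n_1n_2n_3n_4}=\varnothing$. For the real-axis condition $(3)$ I would use that the tiles meeting $\R$ form a chain covering $\Omega_I\cap\R$ with real traces bounded intervals, so two distinct real tiles intersect only when consecutive, and then share an edge $T_{n_1n_2}$ which, being a symmetric connected arc, must cross $\R$; thinness of the collar preserves this and yields $A_{n_1n_2}\cap\R\ne\varnothing$, while condition (iv) of Definition \ref{def_tiling} transfers verbatim to the ``at most two real neighbours, and if two then disjoint'' clause.

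The delicate point is condition $(5)$, that every initial segment $(A_0,\ldots,A_n)$ is a Cartan $(n+1)$-string, which I would prove by induction on $n$ by unwinding Definition \ref{CartanPair}. Granting that $(A_0,\ldots,A_{n-1})$ is a Cartan $n$-string, I must check that $(A_0\cup\cdots\cup A_{n-1},A_n)$ is a Cartan pair and that the linked sequence $(A_0\cap A_n,\ldots,A_{n-1}\cap A_n)$ is again a Cartan $n$-string. The first follows from condition (ii) of Definition \ref{def_tiling}: $T_n$ meets $T_0\cup\cdots\cup T_{n-1}$ in a finite union of disjoint arcs, so $A_n\cap(A_0\cup\cdots\cup A_{n-1})$ is a finite union of disjoint closed basic sets, which together with the separation property is exactly a Cartan pair. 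The linked sequence consists of the edge collars $V_j\cap V_n$ strung along $\partial T_n$, overlapping consecutively in the vertex collars $V_{j_1j_2n}$ and otherwise disjoint --- a one-dimensional chain, which is a Cartan string provided the global enumeration $0,1,\ldots,n-1$ restricts to an admissible order along $\partial T_n$. I expect precisely this --- that the single fixed ordering of the tiling simultaneously witnesses the Cartan-string property of the boundary chain of every tile --- to be the main obstacle, and I would resolve it from the way the tiling is built, each $\mathcal{T}^m$ being a tiling with $T_m$ attached along a connected portion of $\partial(T_0\cup\cdots\cup T_{m-1})$, which forces the neighbours of a tile to occur in the order in which they are met along its boundary.
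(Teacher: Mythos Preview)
Your approach is essentially the paper's: thicken the tiles to symmetric tubular neighbourhoods $V_l$ in $\C_I$, then set $B_l=\widetilde{\mathring V_l}$. The paper records the four geometric requirements on the $V_l$ (subordination, real-axis chain, basic double and triple intersections with empty quadruples, and the separation property for $V_l$ against $V_0\cup\cdots\cup V_{l-1}$) and declares the result a Cartan covering without further comment; your write-up supplies the dictionary and the nerve-preservation argument more explicitly, which is fine.

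Where you diverge is the last paragraph. The worry about whether the global enumeration ``restricts to an admissible order along $\partial T_n$'' is unnecessary, and your proposed fix is based on a false reading of Definition~\ref{def_tiling}: condition (ii) explicitly allows $T_m\cap(T_0\cup\cdots\cup T_{m-1})$ to be a \emph{finite union of disjoint} arcs, not a single connected arc. Fortunately no such control is needed. Once you know $A_{n_1n_2n_3n_4}=\varnothing$, the recursion in Definition~\ref{CartanPair} terminates after two levels regardless of order: the linked sequence $(A_0\cap A_n,\ldots,A_{n-1}\cap A_n)$ consists of edge collars whose pairwise intersections are vertex collars and whose triple intersections are empty; hence the second-level linked sequences $(A_0\cap A_m\cap A_n,\ldots,A_{m-1}\cap A_m\cap A_n)$ are pairwise \emph{disjoint} closed basic sets, and any such sequence is trivially a Cartan string in any order. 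The Cartan-pair check $((A_0\cap A_n)\cup\cdots\cup(A_{m-1}\cap A_n),\,A_m\cap A_n)$ reduces to the intersection being a finite disjoint union of vertex collars plus separation, which follows from the transversality of the collars and does not depend on the enumeration. So drop the ordering discussion and simply observe that condition~$(4)$ forces the recursion to bottom out.
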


\begin{proof}
For each $l \in \bN_0 $, let $D_l \Subset \Omega_I $ be an  axially symmetric  tubular neighbourhood of $T_l$  
such that
\begin{itemize}
\item[$(1)$] if $T_l \subset U_{j,I}$ then $T_l \subset D_l \subset \overline{D_l} \subset
  U_{j,I},$
\item[(2)] if nonempty, the intersection $D_{l} \cap \R $ is
  connected and $\partial D_{l} \cap \R $ consists of two points;
  moreover, there exists at most two sets $D_{l_1}, D_{l_2}$
 which  intersect the real axis and  $D_{l}$ and $D_{l_1 l_2 } =
  \varnothing$;
\item[(3)] for $0 \leq l_1 < l_2 < l_3 \leq l$
 each set of the form  $D_{l_1l_2l_3} $ or $D_{l_1l_2}$, if not empty, is either a symmetric topological disc, if it intersects the real axis, or, if it does not,  it is equal to $U \cup R(U)$, where $U$ is a topological disc contained in the upper half-plane.
Moreover,   $D_{l_1l_2l_3l_4} = \varnothing $ for $0 \leq l_1 < l_2 < l_3 < l_4 \leq l$;
\item[$(4)$] the intersections $D_l \cap (D_0 \cup \ldots \cup
  D_{l-1})$ are  tubular neighbourhoods of the arcs $T_l \cap (T_0 \cup
  \ldots \cup T_{l-1})$  i.e. finite unions of open topological discs, such that also
  their closures are disjoint and   the sets $D_l$ and $(D_0 \cup \ldots \cup
  D_{l-1})$ enjoy the separation property (Definition \ref{CartanPair}).
\end{itemize}
\begin{figure}[h]
$\begin{array}{cc}
 {\includegraphics[width=0.4\textwidth]{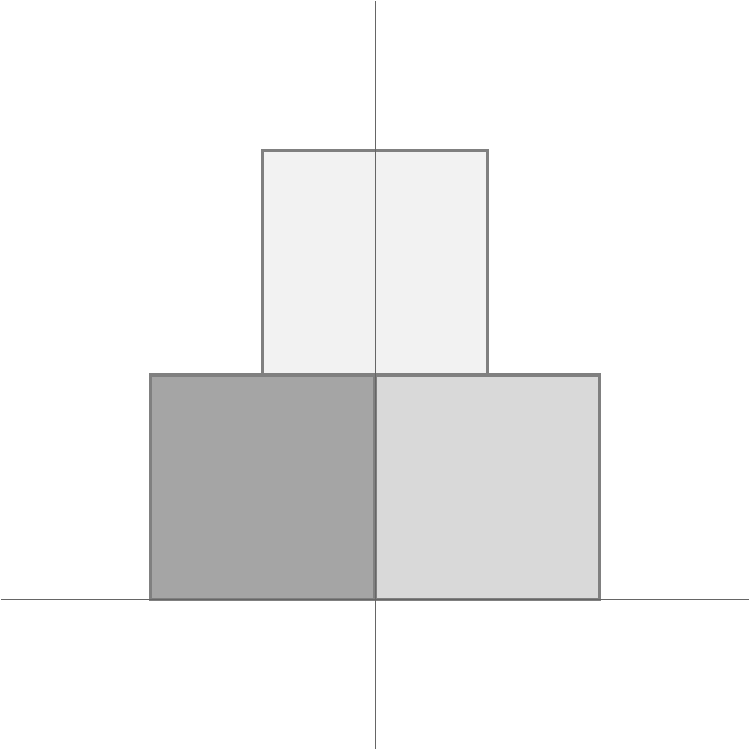}}&
{\includegraphics[width=0.4\textwidth]{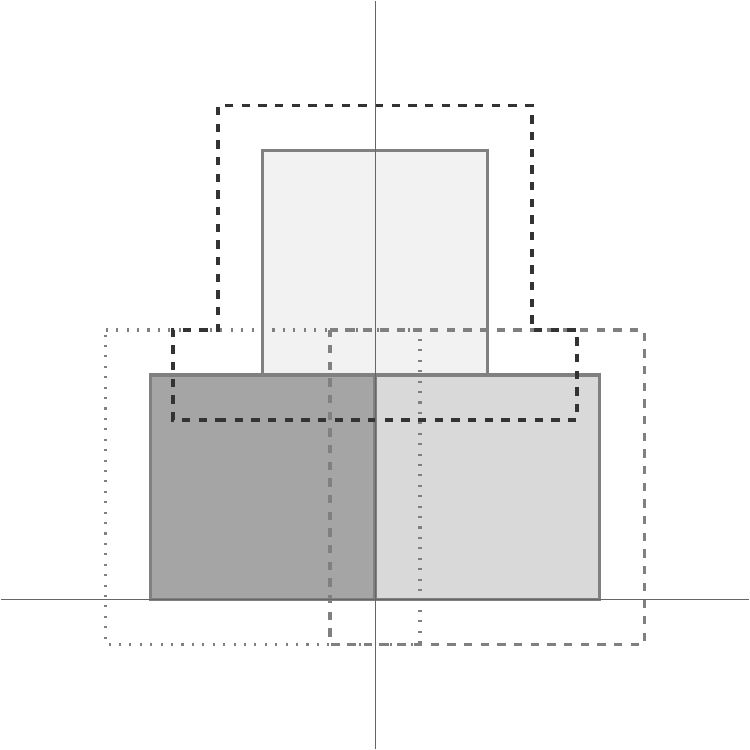}}
\end{array}$
\caption{Tiles (left) and  tiles with neighbourhoods (right).}
 \label{TilNbhd}
 \end{figure}

As depicted in Figure \ref{TilNbhd}, the neighbourhoods are obtained by enlarging the tiles's interiors. The separation property means that their closures  share a piece of boundary. This property is insured if the tiles are in the same geometric position as the left and the right tile in  Figure \ref{TilNbhd}. If we place a tile on top of them, we have to enlarge the neighbourhood near the  boundary the top tile shares with the lower ones to achieve the separation property (black dashed line in Figure \ref{TilNbhd}).

These properties ensure that if $T_l$ is a closed topological disc with a piecewise smooth boundary,
then $D_l$ is an open disc, and if $T_l$ is a union of two closed disjoint
topological discs, then the set $D_l$ is a union of two open disjoint
topological discs. Without loss of generality we assume that the
closures of these two discs are also disjoint (else we shrink them a
little).  Denote by $B_l:= \tilde{D}_l$ the axial symmetrization of
$B_l$. Then $\mathcal{B}:=\{B_l\}_{l \in \bN_0}$ is a Cartan
covering of $\Omega.$
\end{proof}

From now, on we restrict our considerations to constructing the symmetric tilings of $\Omega_I.$

\begin{example}\label{TilingOfASquare}
As the first model example, we present a tiling of the square $[-1,1] \times
[-1,1] \subset \R^2 \cong \C_I$ which also serves as a model case. The
tiling is obtained from a symmetric grid which consists of horizontal
and vertical lines chosen in the manner presented in Figure
\ref{BricksS} (a).  By choosing a finer division in the coordinate
directions, the resulting tiles can be as fine as we wish.  In
addition, if a finite number of points is given on the boundary of the
square, we can choose the horizontal and vertical lines so that the
intersection points of the boundary of the square and vertical and
horizontal segments do not contain any of the given
points.  The tiles are listed in such an order, that the tiles on the
real axis come first and then the tiles which consist of pairs of
symmetric regions are added so that the distance from the real axis is
increasing.
\begin{figure}[h]
$\begin{array}{cc}
 {\includegraphics[width=0.4\textwidth ]{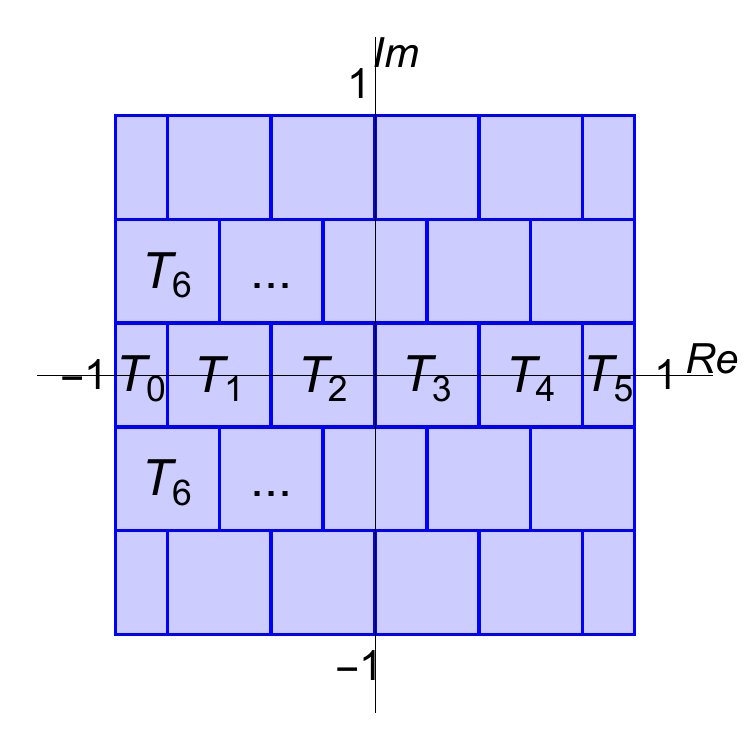}}&
{\includegraphics[width=0.4\textwidth]{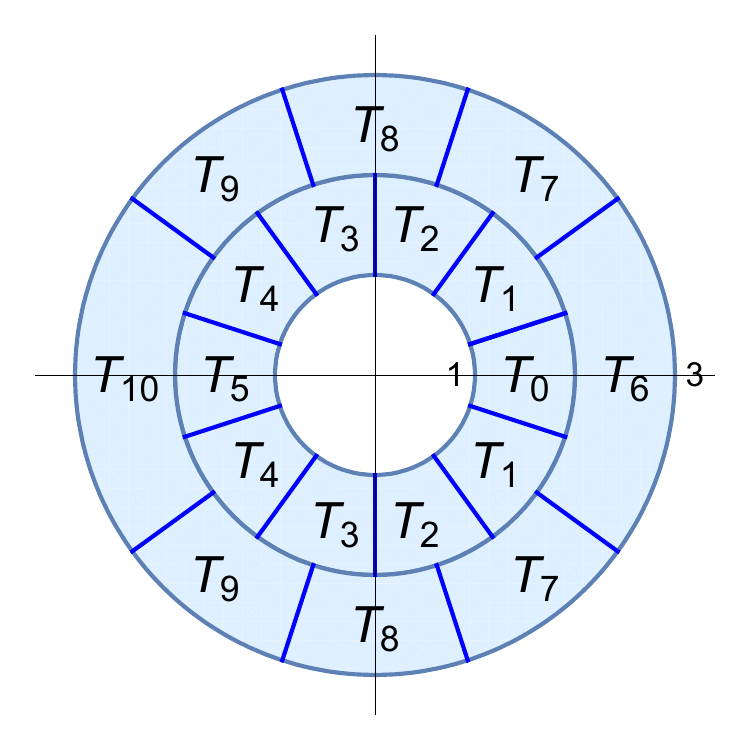}}\\
(a)&(b)
\end{array}$
\caption{(a) Symmetric tilings of the square  and (b) the annulus $\overline{A(0;1,3)}$ with $c_0 = 1,c_1 =2$ and $c_2 = 3$.}
 \label{BricksS}
 \end{figure}
\end{example}
\begin{example} \label{TilingOfAnnuli} As another example,  consider  the closed annulus $\overline{A(0;1,3)}$ (Figure \ref{BricksS} (b)).  In a slice $\C_I$, the grid can
  be defined by using polar coordinates.
Given an axially symmetric open covering $\mathcal{U}$, the
intersection with $\C_I$ defines a symmetric open covering of $\C_I.$

 To tile the closed annulus $\overline{A(0;1,3)},$ divide $[1,3]$ to $1 =
 c_0 < c_1 < \ldots < c_m = 3$ and $[0,\pi]$ to $0 < \vph_0<\vph_1<
 \ldots < \vph_{2k+1} < \pi.$ Cut the annuli
 $\overline{A(0,c_{2i},c_{2i+1})}$ with rays $\vph = \pm \vph_{2j}$ and
 the annuli $\overline{A(0;c_{2i+1},c_{2i+2})}$ with rays $\vph = \pm
 \vph_{2j+1}.$
 If both partitions are fine enough, then the grid defined by
 the circles  $r = c_i$ and by the rays  $\vph = \pm \vph_{2j}$
 is such
 that each tile defined by this grid is contained in a member of
 the covering. The same holds for the grid defined by the circles
 $r = c_i$ and by the rays  $\vph = \pm \vph_{2j+1}.$

The tiles have to be listed in the correct order in the sense that for
each $i,$ all tiles in $\overline{A(0,c_i, c_{i+1})}$ are listed before
those in $\overline{A(0,c_{i+1}, c_{i + 2})}.$ It is obvious that in this
manner, a newly added tile intersects the previously added tiles in a
union of  smooth arcs.
\end{example}

\begin{example}\label{TilingOfPairOfSquares}
As the third model example (Figure \ref{BricksU}), we present a tiling of the union of squares $[-1,1] \times [-3,-1] \cup [-1,1] \times [1,3].$
\begin{figure}[h]\centering
$\begin{array}{ccccc}
{\includegraphics[height=0.26\textheight]{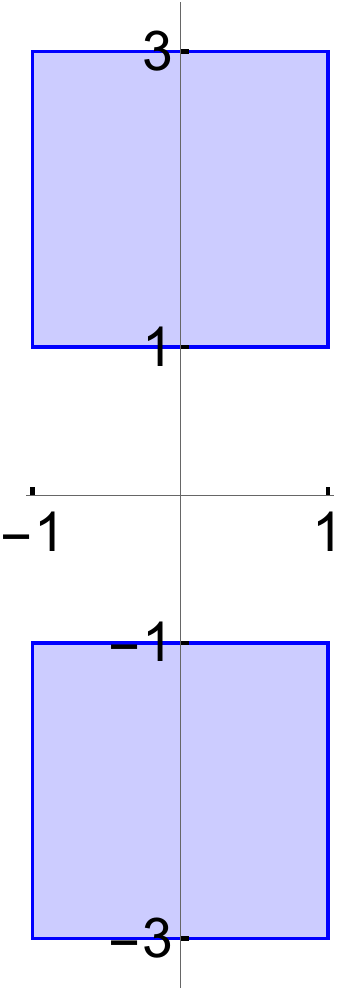}}& &&&
{\includegraphics[width=0.4\textwidth]{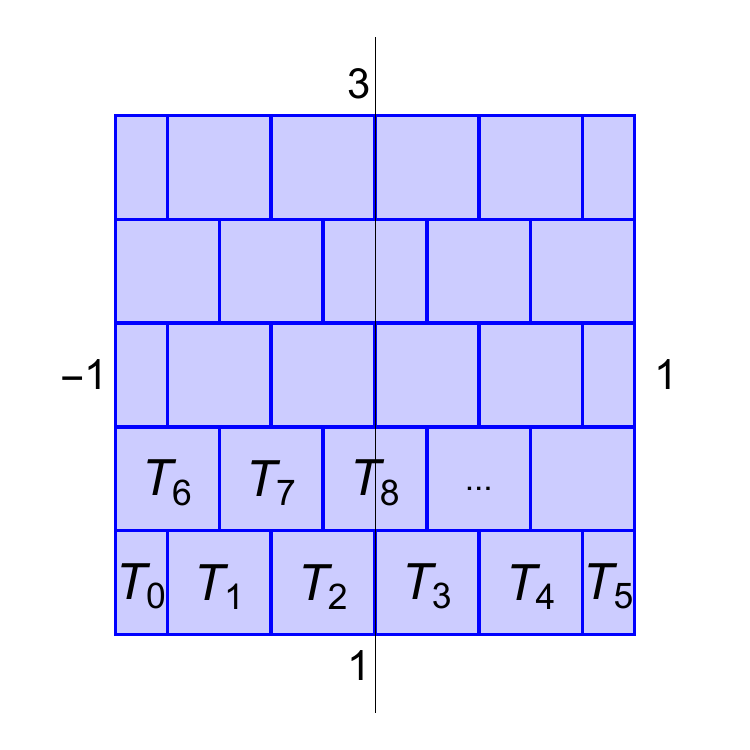}}\\
(a)&&&&(b)
\end{array}$
\caption{(a) Pair of squares and (b) tiling of the model square $Q=[-1,1] \times [1,3]\qquad\qquad$ 
}\label{BricksU}
\end{figure}

By Remark \ref{EasyTiling}, it suffices to construct a nonsymmetric tiling of $[-1,1] \times [1,3]$  and then extend it to
$[-1,1] \times [-3,-1]$ by reflection.
The tiling is obtained from a grid which consists of horizontal and vertical lines chosen in the manner presented on Figure \ref{BricksU}.
The resulting tiles can be as fine as we wish by choosing a finer division in the coordinate directions.
As in the previous model example, the tiles with a smaller distance from the real axis are listed first.

Notice that the tiling of a square in Example \ref{TilingOfASquare}
can be obtained by tiling the upper rectangle $[0,1] \times [-1,1]$
following the scheme in this example and then symmetrize the tiles.

\end{example}

\subsection{Symmetric tilings for the general case}

The main result of this section, Theorem  \ref{MainTheoremTiling}, whose proof will be given after providing some extra tools, is a key ingredient for the proof of  Theorem \ref{BasicCovering}.
\begin{theorem} \label{MainTheoremTiling} Let $\Omega$ be an axially symmetric  domain and 
  ${\mathcal U}_I=\{U_{I,n}\}_{n \in \bN}$ a symmetric locally finite open
  covering of $\Omega_I$.  Then, there exists a symmetric tiling
  ${\mathcal T}$ of $\Omega_I$ subordinated to $\mathcal{U}_I.$
\end{theorem}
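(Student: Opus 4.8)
The plan is to construct the tiling by combining the three model examples (Examples \ref{TilingOfASquare}, \ref{TilingOfAnnuli}, \ref{TilingOfPairOfSquares}) with a symmetric exhaustion of $\Omega_I$, building the tiling inductively annulus-by-annulus. First I would reduce to the two structural cases according to Definition \ref{slice_domain}: either $\Omega$ is a symmetric slice domain (so $\Omega_I$ is connected and meets $\R$) or a product domain (so $\Omega_I = \Omega_I^+ \cup \refl{\Omega_I^+}$ splits into two symmetric halves). In the product case, by Remark \ref{EasyTiling} it suffices to tile $\Omega_I^+ \subset \C_I^+$ with the symmetry requirement dropped and then symmetrize by reflection, which automatically produces a symmetric tiling with no tiles meeting $\R$; this is essentially the situation of the model in Example \ref{TilingOfPairOfSquares} applied locally. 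So the genuine difficulty is the slice-domain case, where tiles must cross the real axis and conditions (iii) and (iv) of Definition \ref{def_tiling} must be respected along $\R$.

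The main construction would proceed as follows. Using Proposition \ref{SymCov}, fix a symmetric Runge exhaustion $\{K_n\}_{n \in \bN_0}$ of $\Omega_I$ with smooth boundaries, arranged so that $K_0$ is a closed symmetric topological disc meeting $\R$ (in the slice case) and each difference $\overline{K_{n+1} \setminus K_n}$ is a disjoint union of symmetric closed annuli. On $K_0$ I would place a tiling modeled on Example \ref{TilingOfASquare} (a single symmetric disc tiled into a central chain of real-axis tiles flanked by symmetric pairs). On each successive annular shell I would tile following the polar-coordinate scheme of Example \ref{TilingOfAnnuli}, using a staggered `brick' pattern so that when passing from one radial layer to the next the newly added tiles meet the old ones only in smooth arcs, and so that the $*$-order-$3$ intersection points become the isolated symmetric vertices demanded by (iii). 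Throughout, the grid lines are chosen fine enough (a $\delta$-tiling with $\delta$ decreasing) that each tile lands inside some member of ${\mathcal U}_I$; this uses local finiteness of ${\mathcal U}_I$ to guarantee a uniform positive Lebesgue number on each compact shell. The tiles are listed shell-by-shell, real-axis tiles first within each shell and then symmetric pairs in order of increasing distance from $\R$, exactly as in the model examples; one checks that each partial union $T_0 \cup \cdots \cup T_m$ is precisely some $K_n$ or an intermediate symmetric compact set, so $\mathcal{T}^m$ is a finite symmetric tiling and $\bigcup_l T_l = \Omega_I$.

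The hard part will be verifying the combinatorial axis conditions (iii) and (iv) globally and ensuring the shell-to-shell matching is legitimate as an extension of tilings. Specifically I must arrange that no four tiles share a point ($T_{l_1l_2l_3l_4} = \varnothing$), that triple intersections are isolated non-real symmetric vertex pairs, and crucially that along $\R$ the real-axis tiles form a genuine chain: each such tile meets at most two other real-axis tiles, any two real-axis tiles that overlap do so in a disc meeting $\R$, and no three of them have a common point ($T_{l_1l_2} = \varnothing$ for the third). This is the quaternionic symmetry constraint that has no complex analogue, and it forces the $K_0$-disc and the innermost annular shell to be tiled with care so that the real chain threads cleanly through them. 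I would handle this by treating a thin symmetric neighborhood of $\Omega_I \cap \R$ as a one-dimensional chain of overlapping discs (as in the Remark following Definition \ref{def_tiling}), tiling it first as a linear `brick wall' with single overlaps, and only then filling in the non-real regions above and below with the pair-of-squares scheme of Example \ref{TilingOfPairOfSquares}; the separation between real and non-real tiles keeps condition (iv) intact. Once the local patterns are pinned down, the verification of (i)--(iv) for each $\mathcal{T}^m$ reduces to the already-illustrated model checks, and passing to the limit gives the infinite symmetric tiling of $\Omega_I$.
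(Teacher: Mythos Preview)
Your overall strategy---reduce to the slice case, exhaust by symmetric compacts, tile shell by shell with real-axis tiles first---matches the paper's, but there is a genuine gap in the inductive step. You assume that the exhaustion can be arranged so that each $\overline{K_{n+1}\setminus K_n}$ is a disjoint union of symmetric closed annuli, and then you tile these by the polar-coordinate model of Example~\ref{TilingOfAnnuli}. For a general symmetric domain $\Omega_I$ this is not possible: as $n$ grows, components of $K_n$ may merge, entirely new components of $K_{n+1}$ may appear disjoint from $K_n$, and (because $\Omega_I$ need not be simply connected) $K_{n+1}$ may acquire holes that $K_n$ does not yet see. In all of these situations $\overline{K_{n+1}\setminus K_n}$ is a multi-holed region, not a union of annuli, and the annulus model does not apply. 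Even when a shell happens to be an annulus, its boundary curves are arbitrary smooth Jordan curves, so a literal polar grid is unavailable; one needs a conformal transplant to a model, and controlling condition~(iv) under such a transplant is exactly the delicate point.

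The paper addresses precisely this difficulty with machinery you have bypassed: the filled sets $K^{\bullet}$, the distinction between extending the tiling \emph{outwards} (into $K_{n+1,\nu}\setminus \mathring{(K_{-1}^{\bullet})}$) and \emph{inwards} (into the holes $K_{-1}^{\bullet}\cap K_{n+1,\nu}$), the reflection across a small circle inside a hole to reduce the inward problem to the outward one, and above all the $\mathcal D$- and $\mathcal B$-necklaces (Definition~\ref{necklaces}) together with Proposition~\ref{IndStep}. The complete symmetric necklace is what threads the real-axis chain through the filled components of $K_n$ sitting on $\R$ while keeping conditions~(iii) and~(iv) intact; your ``thin neighbourhood of $\Omega_I\cap\R$ tiled as a brick wall'' is a first approximation to this but does not interact correctly with the already-tiled pieces of $K_n$ on the real axis or with holes of $K_{n+1}$ that themselves meet $\R$. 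To repair your argument you would need to reinvent the necklace construction and Proposition~\ref{IndStep}.
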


\begin{proof}[Proof of Theorem \ref{BasicCovering}.] Theorem \ref{BasicCovering} follows directly from Theorem  \ref{MainTheoremTiling} and Proposition \ref{tiling1}.
 It is obvious from the construction in the proof of Proposition \ref{tiling1} that a sequence $\{\ve_n> 0\}_{n \in \bN_0}$ exists so that
the family $\mathcal B^t$ defined in Theorem \ref{BasicCovering} has all the desired properties.
\end{proof}

\begin{remark}
In complex analysis,  a covering of this sort appears
when considering a Morse function.  An extensive explanation of this approach can be found in \cite{FF}, subsection 3.9.
In particular, in  one complex variable, one could
use the regular level sets and flows of the gradient vector field
of the Morse exhaustion function of a domain (and its small perturbations) to obtain a sufficiently fine
grid in the domain.

Also, in several variables, the approach relies upon the Morse function, but the construction is different and uses
to the so--called ``bump method'' introduced
by Henkin-Leiterer in (\cite{hl}).

In general, for an axially symmetric slice domain $\Omega\subset\H$ a
symmetric Morse function $\Omega_I \ra \R$ may not exist, since in the
construction of a Morse function of a set, one has to use Sard's
theorem.  Even if it existed, controlling the number of sets intersecting for the covering restricted to the real axis is difficult.

Another possibility would be to approximate $m$ by a Morse function and construct the covering by reflecting its level sets and integral curves of the gradient vector field in the upper closed leaf.
Unfortunately, if there were a degenerate critical point on the real axis, a regular level set of the approximated function  might intersect the real axis many times. Hence, the reflection of the sublevelset creates a hole that cannot be covered without creating a non-simply connected intersection.

\end{remark}

Before going to the proof we will define some useful notions.
 The following definition explains how we join the given family of smooth discs in a necklace.

\begin{definition}[$\mathcal D$-necklace] \label{necklaces} Let $D$ be a closed  topological disc with a piecewise smooth boundary  and let $\mathcal D :=\{D_i \subset D, i =1,\ldots, {k}  \}$ be a family of  closed  disjoint discs in $\mathring{D}$ with smooth boundaries.
Let $l_i:[0,1] \ra D, i = 1,\ldots,k$  be smooth disjoint arcs so that
\begin{itemize}
\item $\cup_{i = 1}^{k_1} l_i((0,1)) \cap D_i = \varnothing$   for all $i,$
\item $l_1(0) \in \partial D, l_1(1) \in \partial D_1$ and $l_i(0) \in \partial D_{i-1},$ $l_i(1) \in \partial D_{i},$ $ i = 2,\ldots, k,$
\item the intersections of the arcs with the boundaries of the discs are perpendicular.
\end{itemize}
Then the sequence $\{l_1,  D_1, l_2,  D_2,\ldots, l_k, D_k\}$ is called a {\em $\mathcal D$-necklace}. If the discs are all contained in $\C^+$, then we require that also the arcs are in $\C^+.$
If $D$ and the discs $D_i$ are symmetric and the arcs $l_i$ are segments on the real axis, the necklace is called a {\em symmetric $\mathcal D$-necklace}. If, in addition, a segment $l_{k+1}$ on the real axis joining $\partial D_k$ and $\partial D $ is added, then
$\{l_1,  D_1, l_2,  D_2,\ldots, l_k, D_k, l_{k+1}\}$ is called a {\em complete $\mathcal D$-necklace}. A complete necklace is {\em trivial } if $\mathcal D$ is empty and the arc is $l:= \mathrm{id}|_{D \cap \R}. $
\end{definition}

With the next definition, we explain how to define a tiling in a neighbourhood of a necklace.
\begin{definition}
A {\em  $\delta$-tiling of a (symmetric) $\mathcal D$-necklace} $\{l_1,  D_1, l_2,  D_2,\ldots, l_k, D_k\}$ in a closed topological disc $D$ is defined to be any tiling $\mathcal T = (T_0,\ldots, T_n)$ of $T:=T_0 \cup \ldots \cup T_n \subset D$ so that for any $j$ the connected components of $T_j$ have diameter at most $\delta$,  the sets $l_j^*,\partial D_j \subset T$ and for any tile $T_l$ we have $T_l \cap \partial D_j = T_l \cap D_j$  (i.e. the tiles are attached to $\partial D_j$  from the outside (see Figure \ref{DBnecklace})). Moreover, we require that none of the sets  $\partial D_j$ is contained in the union of only two tiles.
\end{definition}

In the sequel, we will construct a specific tiling of a necklace $\{l_1, D_1\}$, where tiles are listed in the precise order, following the order of arcs and discs given by the necklace and their orientations.
Recall that if the necklace is symmetric (i.e. $D$ and $D_1$ are both symmetric), the arc is a segment on the real axis.
If  $D_1$ is in the upper half-plane, also the arc is in the upper half-plane.

Let $\lambda_1 : [0,1] \ra \partial D_1$ be a parametrization of positively oriented $\partial D_1$  and
let $n: \partial D \ra \C^*$ be its  unitary outer normal. Choose $0 = t_0 < t_1 < \ldots < t_j = 1$ for some $j \geq 3$ so that $ \lambda_1(t_i)\ne l_1(1)$,  $\diam \lambda_1([t_i,t_{i+1}]) < \delta/4$  and $\diam l_1([t_i,t_{i+1}])< \delta/4$  for $ i = 0,\ldots j-1$.  Moreover, if $\partial D_1$ is symmetric, we choose $t_i$ so that the points $d_i = \lambda_1(t_i)$ are  symmetric and nonreal. Let $r > 0$ and $\phi_{l_1}, \phi_{\lambda_1}$  be given by Lemma \ref{difeo} and assume that $\delta< r$ so that Lemma \ref{difeo} applies and, moreover, that $\diam(\phi_{\lambda_1}(\lambda_1([t_i,t_{i+1}]) \times [0,\delta/4]))< \delta$ and $\diam(\phi_{l_1}(l_1([t_i,t_{i+1}]) \times [0,\delta/4]))< \delta$ for $ i = 0,\ldots j-1$.

 {\em Tiles attached to $\partial D_1$.} Define the sets by $S_i:=\phi_{\lambda_1}(\lambda_1([t_{i}, t_{i+1}]) \times [0,\delta/4]),$ $i = 0,\ldots j-1.$  Without loss of generality we may assume that  $S_0$ is the only one of the sets  $S_i$ which intersects the arc $l_1.$ Set  $\mathcal T_{\lambda_1}:= (T_0:=S_0,\ldots, T_{j}:=S_j).$

In the case of a symmetric necklace, we define the tile $T_0$ in the same manner, and the rest of the tiles are unions of pairs of symmetric sets
$S_i$ following the orientation of the part of the circle in the upper half-plane, as in Figure \ref{BricksS}(b).

{\em Tiles covering $l_1$.} Choose $\delta_1 < \delta/4$ and
set $T''_i:= \phi_{l_1}(l_1([t_{i}, t_{i+1}]) \times [0,\delta_1])$ for $i = 0,\ldots j-1.$
Let $\delta_1$ be so small, that the tiles $T_i''$ do not intersect the tiles $T_1,\ldots, T_j.$
Define $T'_i:=  \overline{T_i''\setminus T_0}$, discard the empty tiles and list the remaining tiles in the tiling
$\mathcal T_{l_1}:=(T'_{0},\ldots, T' _{j'})$ following the orientation of the arc $l_1$. By choosing $\delta_1$ small enough, we achieve that the tiles $T'_{k}, k = 0,\ldots,j'$  do not contain  points
$\phi_{\lambda_1}(\lambda_1(t_i) \times \{\delta/4\})$ for $ i = 0,1$,  and so
$\mathcal T_{l_1} \underrightarrow{\cup} \mathcal T_{\lambda_1}$ is a $\delta$-tiling of the union of all of its  tiles. 
Any such a tiling is called  an {\em ordered $\delta$-tiling of the $\mathcal D$-necklace. } If the necklace is symmetric, then the constructed tiling is also symmetric and called a { \em symmetric ordered $\delta$-tiling of the $\mathcal D$-necklace. }

The construction  of the tiling of a necklace $\{l_1,D_1,\ldots,l_k, D_k\}$ or a $\mathcal D$-complete necklace is analogous, taking into account the ordering of the necklace. If all the discs are in the upper half-plane, so can be the necklaces and the tilings.

\begin{remark}
If, in addition, a finite set of points on $\partial D \cup (\cup_{D_j \in \mathcal D}\partial D_j)$ is given, the tiling can be chosen in such a way that the vertices of the tiling avoid the given points.
\end{remark}
\begin{remark}\label{DB} If we have two families of disjoint discs $\mathcal D$ and $\mathcal B$ in $D,$ it is straightforward that we can construct disjoint
$\mathcal D$- and $\mathcal B$-necklaces with disjoint tilings.
\end{remark}
An example of such a (symmetric) tiling with two families is presented in Figure \ref{DBnecklace}, $\mathcal D = \{D_1\},$ $\mathcal B = \{B_1, B_2\}$, where $D_1$ is a white disc  and $B_1, B_2$ are blue discs.

\begin{figure}[h!]
  \centering
\begin{center}
{\includegraphics[width=0.4\textwidth]{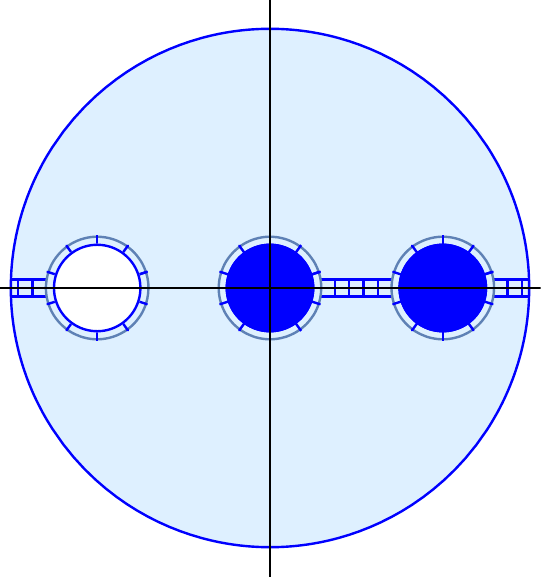}}
  \caption{Symmetric $\mathcal D$-and $\mathcal B$-necklaces and their tilings.}\label{DBnecklace}
\end{center}
\end{figure}

\begin{remark}\label{NeckTil}
  It follows from the construction above that given an arbitrary open covering $\mathcal U = \{U_{\lambda}\}_{ \lambda \in \Lambda}$ of a smooth closed disc $D\subset\C,$ one can choose  $\delta > 0$ so that the constructed ordered $\delta$-tiling of any  necklace in $D$ is subordinated to $\mathcal U$.  If, in addition, $D$, the open covering $\mathcal U$ and the necklace are all symmetric,  the ordered $\delta$-tiling  of $D$ can be constructed symmetric.
\end{remark}

\begin{proposition}\label{IndStep} Let $D$ be a closed disc with a piecewise smooth boundary,   $\mathcal D = \{D_i\subset D , i =1,\ldots, {k_1} \}$
  and $\mathcal B= \{B_j\subset D ,j = 1,\ldots {k_2} \}$
  families of smooth disjoint closed discs, which can be also empty, i.e. $k_1 = 0$ or $k_2 = 0.$ Set $\Delta:={D} \setminus \mathrm{ int }(D_1 \cup \ldots \cup D_{k_1} \cup B_1
  \cup \ldots \cup B_{k_2})$ and let $V \subset \partial \Delta$
  be a discrete set of points containing all nonsmooth points of $\partial D$.

Let $\delta > 0$ be given.
Then there exists a $\delta$-tiling $\mathcal T_{\Delta}$ of $\Delta,$ so that 
the vertices of the
tiling are disjoint from $V.$ More precisely,
\[\mathcal T_{\Delta}=\mathcal T_{\mathcal B} \underrightarrow{\cup} \mathcal T \underrightarrow{\cup}\mathcal T_{\mathcal D},\]
where $\mathcal T_B$ is a $\delta$-tiling of the $\mathcal B$-necklace, $\mathcal T_D$
is a $\delta$-tiling of the $\mathcal D$-necklace and $\mathcal T$ is a tiling of the closure of the set $\Delta$ with tiles from $\mathcal T_{\mathcal B}$ and $\mathcal T_{\mathcal D}$ removed.

\end{proposition}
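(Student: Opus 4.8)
The plan is to turn $\Delta$ --- topologically a closed disc carrying the $k_1+k_2$ holes $D_1,\dots,D_{k_1},B_1,\dots,B_{k_2}$ --- into a simply connected region by cutting it open along two disjoint necklaces, to tile tubular neighbourhoods of those necklaces by the ordered necklace tilings constructed above, and then to tile the simply connected remainder by a grid as in Example \ref{TilingOfASquare}. Listing the three tilings in the prescribed order will produce $\mathcal T_\Delta$.

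First I would invoke Remark \ref{DB} to produce disjoint $\mathcal D$- and $\mathcal B$-necklaces inside $D$: the $\mathcal D$-necklace strings $D_1,\dots,D_{k_1}$ into a chain attached to $\partial D$ by an initial arc, the $\mathcal B$-necklace does the same for $B_1,\dots,B_{k_2}$, all arcs meet the relevant circles perpendicularly, and the two necklaces (together with their forthcoming tilings) are kept mutually disjoint. For the given $\delta$ I would then build the ordered $\delta$-tilings $\mathcal T_{\mathcal B}$ and $\mathcal T_{\mathcal D}$ of tubular neighbourhoods of the two necklaces by the explicit construction preceding this proposition (tiles encircling each $\partial D_i$, respectively $\partial B_j$, together with tiles covering the connecting arcs, listed in necklace order). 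Since $\partial\Delta$ is compact the set $V$ is finite, so the remark following that construction lets me choose the vertices of $\mathcal T_{\mathcal B}$ and $\mathcal T_{\mathcal D}$ to avoid $V$, in particular to miss the nonsmooth points of $\partial D$.

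Next I set $R:=\overline{\Delta\setminus\bigcup_{T\in\mathcal T_{\mathcal B}\cup\mathcal T_{\mathcal D}}T}$, the region left after the open necklace tiles are deleted. Since the two necklaces join every hole to $\partial D$, deleting a connected tubular neighbourhood of them from $\Delta$ severs every noncontractible loop, so $R$ is a closed topological disc with piecewise smooth boundary. I would tile $R$ by a grid as in Example \ref{TilingOfASquare}, taken fine enough to be a $\delta$-tiling and with its lines positioned so that along the arcs $R$ shares with the necklace neighbourhoods the endpoints of the bulk tiles avoid $V$; call this tiling $\mathcal T$. Finally I would list the tiles as $\mathcal T_\Delta=\mathcal T_{\mathcal B}\underrightarrow{\cup}\mathcal T\underrightarrow{\cup}\mathcal T_{\mathcal D}$ and check conditions (i)--(iv) of Definition \ref{def_tiling}: each newly listed tile meets the union of its predecessors exactly in a finite union of boundary arcs, triple intersections are isolated vertices, and no four tiles meet. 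When $D$, the discs and $V$ are all symmetric, the same steps run with symmetric necklaces, or one tiles the upper half-plane part and reflects by Remark \ref{EasyTiling}.

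The hard part will be the compatibility bookkeeping at the two seams --- between $\mathcal T_{\mathcal B}$ and $\mathcal T$, and between $\mathcal T$ and $\mathcal T_{\mathcal D}$ --- where one must guarantee that the ordering condition (every new tile attaches along arcs, never along a full disc) and the vertex conditions (only triple points, none lying on $V$) survive the gluing. This is exactly why the necklace neighbourhoods are tiled first, with their vertices pinned down, and why the bulk grid is laid down afterwards with its lines adapted to the already-fixed necklace tile boundaries rather than chosen independently.
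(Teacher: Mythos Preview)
Your overall architecture matches the paper's: build the two ordered necklace $\delta$-tilings $\mathcal T_{\mathcal B},\mathcal T_{\mathcal D}$ via Remark \ref{DB}, remove them to get a simply connected remainder $K$, tile $K$ by pulling back a model grid, and concatenate as $\mathcal T_{\mathcal B}\underrightarrow{\cup}\mathcal T\underrightarrow{\cup}\mathcal T_{\mathcal D}$. That much is right.

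What you leave unspecified, though, is precisely the mechanism that makes the \emph{ordering} in $\mathcal T_{\mathcal B}\underrightarrow{\cup}\mathcal T\underrightarrow{\cup}\mathcal T_{\mathcal D}$ legal, and this is the crux. It is not enough to tile $K$ ``by a grid as in Example \ref{TilingOfASquare}'': with an arbitrary uniformisation of $K$ onto a square, the last tiles of $\mathcal T$ may have all of their free boundary lying on $\partial K\cap T_B$, which has already been tiled by $\mathcal T_{\mathcal B}$. Such a tile would then meet the union of its predecessors in a full closed curve rather than a finite union of arcs, violating condition (ii) of Definition \ref{def_tiling}. The paper avoids this by mapping $K$ (via a map smooth up to the boundary except at finitely many points, as in \cite{Pom}) onto the model square $Q=[-1,1]\times[1,3]$ of Example \ref{TilingOfPairOfSquares}, \emph{prescribing} that $\partial K\cap T_B$ goes to the bottom edge $[-1,1]\times\{1\}$ and $\partial K\cap T_D$ to the top edge $[-1,1]\times\{3\}$. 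The bottom-to-top ordering of the model tiling then guarantees that every tile of $\mathcal T$ retains a free side along $\partial T_D$ (not yet tiled) or along $\partial D$; dually, the last-listed $\mathcal T_{\mathcal D}$ tiles always have a free side on the holes $\partial D_i$. You correctly flag this seam bookkeeping as ``the hard part'', but the resolution---separating the $T_B$- and $T_D$-boundaries onto opposite edges of the model square and ordering transversally between them---is the one concrete idea you still need to supply.
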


\begin{proof} Let $\mathcal T_{\mathcal B}$ be an ordered $\delta$-tiling of the $\mathcal B$-necklace, $\mathcal T_{\mathcal D}$
an ordered $\delta$-tiling of the $\mathcal D$-necklace with the vertices of both tilings disjoint from $V.$
Define $T_B:= \cup_{T \in \mathcal T_B} T,$ $T_D:= \cup_{T \in \mathcal T_D} T,$ and let
$K:= \overline{\Delta \setminus ({T_B}\cup {T_D})}.$ The set $K$ needs to be tiled.

Assume that $k_1, k_2 > 0.$ The set $K$ is a topological disc with a
piecewise smooth boundary and can be mapped by a map, smooth up to the
boundary (except at finitely many points, where it is only continuous, compare Theorem 3.9 in \cite{Pom}) to the model square $Q = [-1,1]\times[1,3]$ of
Example \ref{TilingOfPairOfSquares},  Figure \ref{BricksU} (b),  in such a
way that $\partial K \cap T_B$ is mapped
to $ [-1,1]\times\{1\} $ and  $\partial K \cap T_D$  is mapped to $ [-1,1]\times\{3\}.$
As  $\delta$-tiling $\mathcal T$ of $K$ consider the one induced by a fine enough tiling of the model square $Q$.
We may assume  that the images of vertices of $\mathcal T_B$ and $\mathcal T_D$  and the images of set $V$ are not the
endpoints of horizontal and vertical segments of the grid of $Q.$
Then $\mathcal T_B \underrightarrow{\cup} \mathcal T \underrightarrow{\cup}\mathcal T_D$ is the desired tiling of $\Delta.$

If $k_1 = 0$ (or $k_2 = 0$), we apply the same  argument without the
requirements that the $\partial K \cap T_B$ (or $\partial K \cap T_D$) is mapped
to the edge $ [-1,1]\times \{1\}$ (or $ [-1,1] \times \{3\}$).


\end{proof}


\subsection{Proof of Theorem \ref{MainTheoremTiling}}

Since the problem for axially symmetric domains which are not
slice domains will be covered as a subproblem for slice domains, we
assume that $\Omega_I$ is a slice domain.

We proceed in two steps: by Proposition \ref{SymCov}, we first  exhaust $\Omega_I$ by
   symmetric compact sets $\{K_n\}_{n \in \bN_0}$ with smooth boundaries  and then
   proceed by induction. Once the tiling $\mathcal T_n$ of $K_n$   is defined, we extend it to the tiling of $K_{n+1}$ by tiling the set
              $K_{n+1} \setminus \mathring{K_n}$ in the following manner:
              we first extend the tiling of ${K}_n$  with  a tiling of $K_{n+1}\setminus \mathring{(K^{\bullet}_n)}$ (extension outwards), and then we extend the new tiling with a tiling of the set
              $K^{\bullet}_n \cap {K}_{n+1}$ (extension inwards).

The initial tile is $T_0 = K_0$; recall that $K_0 \cap \R \ne \varnothing$. For the induction step, let
$\mathcal{T}_n$ be the symmetric tiling of $K_n$ which is already
defined.  Observe that because the covering is locally finite, there exists $\delta_{n+1} > 0$ such that for
each $z \in K_{n+1}$ the disc $B(z,\delta_{n+1})$ is contained in all
of the open sets of the covering $\mathcal{U}_I$ that intersect $B(z,\delta_{n+1})$. Fix such a
$\delta_{n+1}.$ To satisfy the condition that the tiling is
subordinated to the given covering, it suffices to construct tiles such
that their connected components have a diameter less than
$\delta_{n+1}.$

Denote the union of connected components of $K_n,$ which intersect the
real axis by $K_{n}',$ and the union of those which do not by $
K_{n}''.$ Hence $K_n = K_n' \cup K_n'',$ similarly $K_{n+1} = K_{n+1}'
\cup K_{n+1}'';$  analogously, for the filled components of $K_n,$ we
set ${K}^{\bullet}_n = {K}'^{\bullet}_n \cup
{K}''^{\bullet}_n,$ where ${K}'^{\bullet}_n$
(resp. ${K}''^{\bullet}_n$) denote the union of filled components
of $K_n$ that intersect (resp. do not intersect the real axis). Notice that $(K_n')^{\bullet} = (K^{\bullet}_n)',$
$(K_n'')^{\bullet} = (K^{\bullet}_n)''.$

Let $K_{n+1,\nu}$ be a connected component of $K_{n+1}.$ Put $K_{-1}:= K_n \cap K_{n+1,\nu}.$  We extend the
tiling to the set ${K_{n+1,\nu}}$ in  two
steps. First  we extend the tiling to ${K_{n+1,\nu}} \setminus \mathring{(K_{-1}^{\bullet})}$ (outwards) and then to
${K}_{-1}^{\bullet} \cap K_{n+1,\nu}$.
(inwards).

We also distinguish two cases: (1) if the set  $K_{n+1,\nu}$ is in the upper half-plane, we extend the tiling from $K_n$ to $K_{n+1,\nu}$ and then  extend it to  ${K_{n+1,\nu}} \cup \refl{K_{n+1,\nu}}$ by reflection over the real axis; (2) if $K_{n+1,\nu}$ intersects the real axis, it is symmetric and hence we have to extend the symmetric tiling of $K_n$ to a symmetric tiling $K_{n+1,\nu}.$ \\

\noindent Case 1: $K_{n+1,\nu} \subset K_{n+1}''.$  Then either  $K_{n+1,\nu} \subset \C_I^+$ or  $K_{n+1,\nu} \subset \C_I^-$ and without loss of generality we assume that  $K_{n+1,\nu} \subset \C_I^+.$  By Remark \ref{EasyTiling} it suffices to define the tiling of $K_{n+1,\nu}$ and then extend it to ${K_{n+1,\nu}} \cup \refl{K_{n+1,\nu}}$ by reflection.

Consider the set $K_{n+1,\nu} \cup {K}^{\bullet}_{-1}.$
It is a topological closed disc with finitely many holes  $D_1,\ldots D_{k_1}$
and contains (at most)  finitely many closed  discs $B_1,\ldots B_{k_2}$ which are all the connected components of ${K}^{\bullet}_{-1}.$  Put
$K:=K_{n+1,\nu}^{\bullet},$ $ \mathcal D = \{D_1,\ldots D_{k_1} \},$ $\mathcal B = \{B_1,\ldots B_{k_2} \}.$

{\em (i) Extending the tiling outwards.} 
Proposition \ref{IndStep} with $D := K$ and families $ \mathcal B, \mathcal D$ of closed discs  yields the desired tiling so that the vertices of the new grid that are on the set $K_{-1}$ are different from the ones on $\partial K_{-1}$ induced by the tiling of $K_n.$

{\em (ii) For the  extension of the tiling inwards} 
it suffices to explain the construction for one of the discs $B \in \mathcal B,$ since they are disjoint.

If $B$ is  a connected component of $K_n,$  it is already tiled, and there is nothing to do.
Hence, assume that $B$ is not a connected component of $K_n$ and therefore
 $B \cap K_n$ has at least one hole. Because $K_n$ is Runge in $K_{n+1}$,
the intersection $B \cap K_{n+1, \nu}$ also  has  at least one smaller hole in each hole of $B\cap K_n$.   Let $D_1',\ldots , D'_{k_1}$ be the holes of  $K_{n+1, \nu}$ contained in $B$.
Take a small disc $B(p,r) \Subset D_1'$ and reflect the set
$B \cap K_n$ across the circle $\partial B(p,r).$ The reflection transforms the problem of extending inwards to the problem of extending outwards, and by (i), we can extend the tiling. \\ 

\noindent Case 2: $K_{n+1,\nu} \subset K_{n+1}'.$
Also, in this case, we proceed similarly.  First, we fill outwards and then inwards.


Consider the set $K_{n+1,\nu} \cup {K}^{\bullet}_{-1}.$
It is a symmetric closed disc with finitely many  holes $D_1,\ldots D_{k_1}$ and contains (at most)  finitely many closed  symmetric discs $B_1,\ldots B_{k_2}$, which are connected components of ${K}^{\bullet}_{-1}.$ 
Recall that on $\partial B_j$ there are vertices of the tiling of $K_n$ to be avoided.

{\em (i) Extending the tiling  outwards.} Form a complete (symmetric) necklace of all $D_i$ and $B_j$ that
contain real points and let the symmetric $\delta_{n+1}$--tiling of the necklace be
given by Remark \ref{NeckTil} with vertices disjoint from vertices on tilings contained in the discs $B_j$. In the case of a trivial necklace,
tile the line segment.  Define the set $V$ as the set of the vertices
of this tiling of the necklace. Remove the union of the tiles from the set
$K_{n+1, \nu}\setminus \mathring{(K^{\bullet}_{-1})}$ and define $D$ to be  the connected
component of the new set in the upper half-plane. So $D$ is a closed
disc with a piecewise smooth boundary which contains all the remaining
discs $B_j$ and has all the remaining holes $D_i.$ As before, we collect them in the families
$\mathcal B$ and $\mathcal D.$
Proposition \ref{IndStep}  for $D$ and the set of vertices $V$ yields the desired tiling.

(ii) It remains to extend the tiling inwards, i.e. to tile the closure of the set  $(K_{-1}^{\bullet} \cap K_{n+1, \nu})\setminus {K_n}.$
Choose a connected component $B$ of $K_{-1}^{\bullet}.$
If $B$ is  a connected component of $K_n,$  it is already tiled, and there is nothing to do.
Hence, assume that $B$ is not a connected component of $K_n.$

If $B$ is in the upper (or lower) half-plane, then Case 1 applies.

Assume that $B$ intersects the real axis. As before, $K_{n}\cap B$
equals $B$ with finitely many holes, and each of them
contains a hole of $K_{n+1, \nu}$ (see Figure \ref{case2b}(a)). Since the holes of $K_{n}\cap B$
are disjoint, each one can be filled separately, so we may assume that
there is either a pair of symmetric holes (Figure \ref{case2b}(b) on the right) and then Case 1 applies or
there is only one (Figure \ref{case2b}(b), the light blue disc with centre at the origin) and intersects the real axis. Assume the latter and
denote the hole by $D'.$  We aim to extend the tiling to the set 
$D' \cap K_{n+1,\nu}$.

The set $(D' \cap K_{n+1, \nu}) \cup (K_n \cap D')^{\bullet}$ equals $D'$ with holes $D_1,\ldots , D_{k_1}$ and contains
filled components $B_1,\ldots, B_{k_2}$ of $K_{n}\cap D'.$ 
It may happen, that $k_2 = 0,$ but since $D'$ is a hole of $K_n,$ $k_1$ is necessarily strictly positive.
If $D_j$ for some $j \in \{1,\ldots,k_1\}$ is a hole on the real axis, then by reflection across small circle $S(a,r)\subset D_j,$ centred at the real axis, we transform the case to the first part of Case 2 (filling outwards).
 \begin{figure}[h]
 \centering
 $\begin{array}{ccccc}
  {\includegraphics[width=0.3\textwidth]{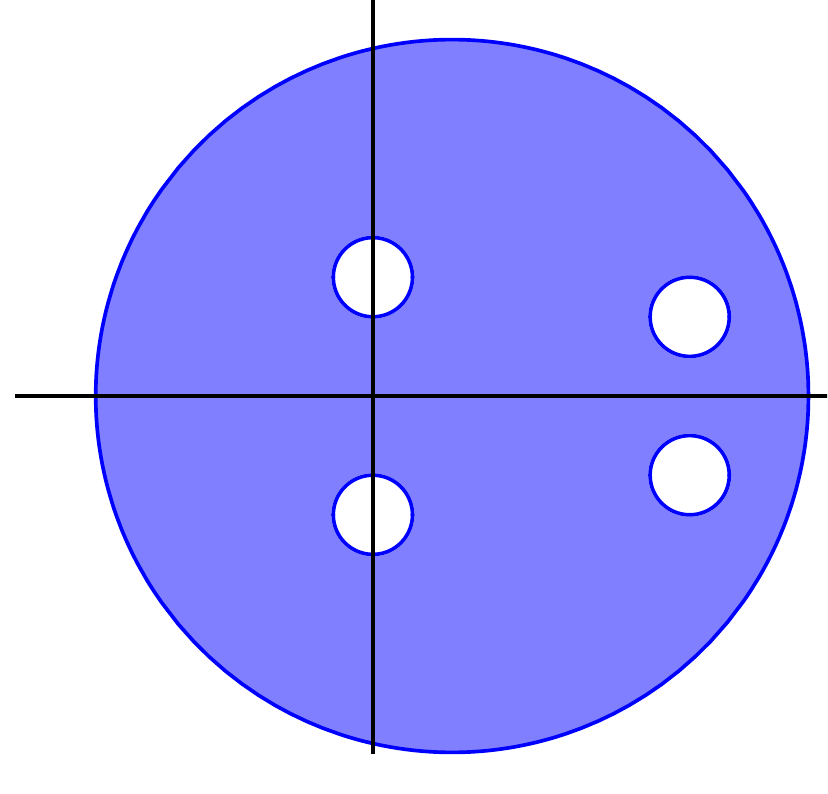}}&
  {\includegraphics[width=0.3\textwidth]{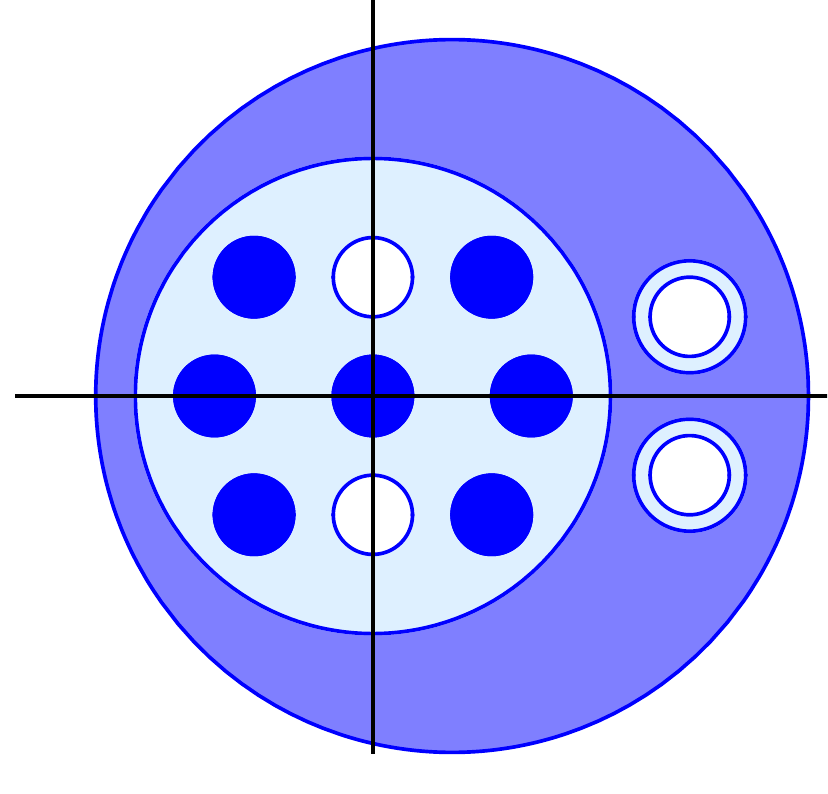}}&
  {\includegraphics[width=0.3\textwidth]{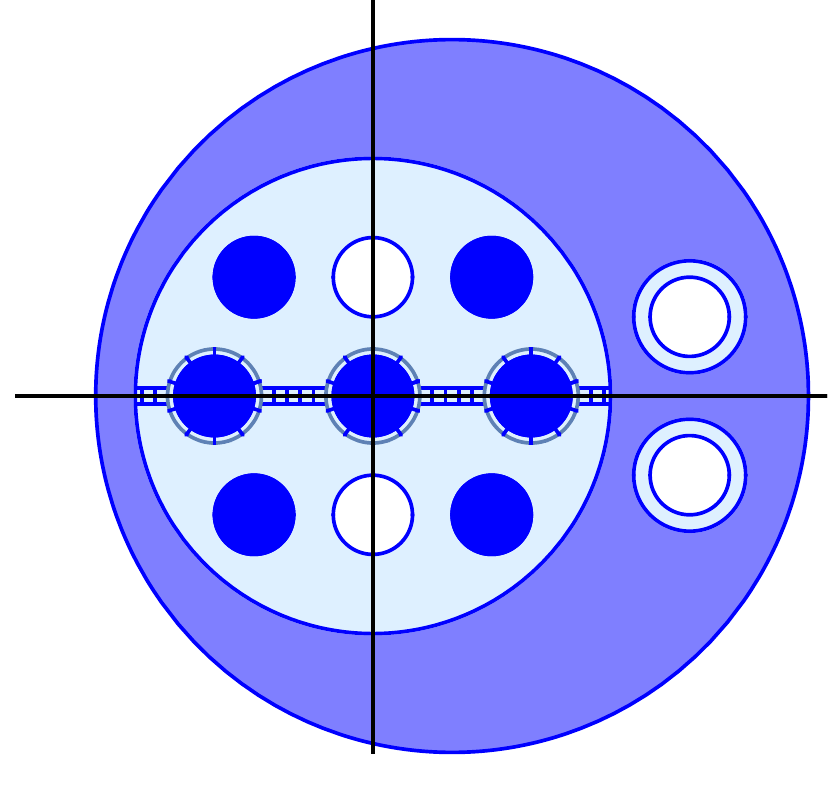}} \\
  (a)&(b)&(c)
 \end{array}$
  \caption{(a) the set $B \cap K_{n+1, \nu}$, (b) the set $K_{n}\cap B$  (blue), already tiled, with a hole $D$ (light blue centred at the origin) that intersects the real axis and with a pair of symmetric holes (light blue), filled components of $K_n$ (dark blue), domain to be tiled (light blue) and holes of $K_{n+1}$ (white), (c) the symmetric tiling of the complete symmetric necklace.}\label{case2b}
\end{figure}

If there is no hole on the real axis, then we form a complete symmetric (possibly trivial) necklace with the
filled components of $K_n$ contained in $D',$ which intersect the
real axis (Figure \ref{case2b}(c)) and extend the already constructed tiling with the tiling
of the necklace.  Remove the union of tiles of this tiling of the necklaces  from
the set $D',$  denote the closure of the connected component in the
upper half-plane by $K'$ and by $V$ the set of vertices of the tiling in $\partial K'.$ Then  it suffices to extend the tiling to $K'$ and  reflect it over the real axis to obtain the tiling of $D' \cap K_{n+1,\nu}.$

Recall that the outer boundary of $K'$ is already
contained in the tiling. Since there is at least one hole in $K'$,  the associated family of holes, $\mathcal{D}$, is not empty,
hence we can extend  the existing tiling with the one provided
by  Proposition \ref{IndStep} for the set $D:=K'$, the family $\mathcal D$ (and the family $\mathcal B$ of filled connected components of $K_n$, if it exists).
Observe that in this manner, we do get the extension of the tiling because the last tiles are added around the holes; therefore, it does not happen that the last tile intersects the union of the previously added tiles in a closed curve.

Notice that since $D'$ itself may contain the filled components of $K_n\cap D',$ which are not connected components of $K_n$,
we have to proceed inductively to extend the tiling to $ (K_n\cap D')^{\bullet} \cap K_{n+1,\nu}$.
Since there are only finitely many of
them, the process stops and defines a (finite) tiling of the compact
set $K_{n+1}.$  This completes the induction step.\qed

\section{Application to antisymmetric homology groups of symmetric planar domains}\label{applications}

{ As for the holomorphic case, one has to take into account  that the slice--preserving quaternionic exponential
$\exp$ has some periodicity, but this periodicity only applies when the function $\exp$  is restricted to a slice $\C_I$ and cannot be extended automatically;
to be more precise,  the function $f(z)=\exp(z+2\pi i)$   is periodic in $\C_i$ but  it is no longer
 slice--preserving (it is only $\C_i$ preserving), hence the periodicity of the  (extension) of the function $f$ to $\H$ is
not preserved.
On the other  hand the function $\exp(z)$ is $2\pi\mathcal{I}$--periodic in $\H\setminus\R$.

To give a consistent definition of fundamental domains of $\exp$ and its restrictions $\exp_I$ on each slice $\C_I$, it is convenient to extend the lattice $ i\Z$ in $\C_i$ and consider  the (image of) $\mathcal{I} \mathbb{Z}$ in $\H$ as its generalization.} Observe that, if $z\in\H\setminus\R$, then  \begin{equation}\mathcal{I}(z)=-\mathcal{I}(\overline{z}).\label{as}
\end{equation}


%

 To investigate the periods of quaternionic exponential function
in looking for solutions to the multiplicative quaternionic Cousin problems,  we first restrict the problem to slices. The property (\ref{as}) instructs us to consider antisymmetric functions on symmetric domains in $\C$. Namely,  the periods of the function $\exp(z)$ are of the form $2 \pi n \mathcal{I}$ on $\bH\setminus\R$ and when restricted to a slice $\C_I$ we have values $2 \pi n I$ on the upper half-plane and values $-2 \pi n I$ on the lower half-plane.

In a more general  setting, these problems can be formulated in terms of the vanishing of properly defined cohomology groups, which the authors will  present in a forthcoming paper. In particular,  we formulate an extension of Theorem \ref{AntisymmetricH2} in the quaternionic setting with the help of the function $\mathcal I$.

\begin{definition}[Antisymmetric complexes and cohomology groups]
Let $D\subset \mathbb{C}$ be an open symmetric set and $\mathcal{U} = \{U_{\lambda}\}_{ \lambda \in \Lambda}$ an open symmetric covering of $D$.
{\em An antisymmetric $m$-cochain} of $\mathcal{U}$  is any collection $\{(f_L,U_L), f_L : U_L \rightarrow \mathbb{Z}\}_{L \in \Lambda^m}$ with $f_L$ continuous, constant in $U_L^+$ and  satisfying the antisymmetric property:
\[f_L(z) = - f_L(\bar{z}).\]
Antisymmetric cochain complexes  and coboundary operators are defined as usual
 and corresponding cohomology groups denoted by $H^n_{a}(\mathcal{U},\mathbb{Z})$.
 The open symmetric coverings of $D$ form a directed set under refinement; therefore, we define $H^n_{a}(D,\mathbb{Z})$ to be a direct limit over symmetric open coverings of $D$.
\end{definition}

Notice that the definition of the antisymmetric $m$-cochain  implies that $f_L = 0$ whenever  $U_L \cap \R \ne \varnothing$.

\begin{theorem}[Vanishing of antisymmetric cohomology groups]\label{AntisymmetricH2} Let $D\subset \mathbb{C}$ be a symmetric  open set.
Then
\[H^n_a(D, \Z) = 0 \mbox{ for all } n\geq 2.\]
\end{theorem}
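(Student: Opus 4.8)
The plan is to reinterpret the antisymmetric complex as the relative cochain complex of a pair living in the closed upper half-plane, and then to conclude by a dimension count. Throughout set
\[
X := D \cap \overline{\C^+}, \qquad D^+ := D \cap \C^+, \qquad B := D \cap \R,
\]
so that $X$ is a two-manifold with boundary $B$, and let $j: D^+ \hookrightarrow X$ be the open inclusion with closed complement $B$.

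First I would establish a chain-level isomorphism. Given a symmetric open covering $\mathcal U=\{U_\lambda\}$ of $D$, every intersection $U_L$ is again symmetric, so any continuous (hence locally constant) $\Z$-valued antisymmetric $f_L$ on $U_L$ vanishes on every connected component of $U_L$ meeting $\R$ and is determined by its restriction $f_L^+$ to $U_L^+ := U_L \cap \C^+$; conversely $f_L^+$ may be any locally constant integer function on $U_L^+$ whose value is $0$ on those components whose closure meets $U_L \cap \R$. Extending $f_L^+$ by zero over $U_L \cap B$, the assignment $f_L \mapsto f_L^+$ identifies the antisymmetric $m$-cochains of $\mathcal U$ with the Čech $m$-cochains of the induced covering $\{U_\lambda \cap X\}$ of $X$ that vanish on the subcovering $\{U_\lambda \cap B\}$ of $B$, i.e. with the \emph{relative} Čech cochains of the pair $(X,B)$. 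Since the Čech differential is purely combinatorial, this is an isomorphism of cochain complexes.

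Next I would pass to the direct limit over symmetric coverings. The traces $\{U_\lambda \cap X\}$ are cofinal among open coverings of $X$: any covering of $X$ comes from open sets of $\C$, and replacing each by its symmetrization $W \mapsto W \cup \refl{W}$ produces a symmetric covering of $D$ whose trace on $X$ recovers the original one. Moreover, by the symmetric tiling and Cartan-covering construction of Section \ref{coverings} one may take these coverings arbitrarily fine with all intersections $U_L \cap X$ empty or contractible, so that Leray's theorem applies to the constant sheaf. Combined with the isomorphism of the previous step, the limit then gives
\[
H^n_a(D,\Z) \;\cong\; H^n(X,B;\Z) \;\cong\; H^n\!\bigl(X,\, j_!\,\underline{\Z}_{D^+}\bigr), \qquad n \ge 0,
\]
the relative cohomology of the pair with \emph{constant} coefficients.

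Finally I would read off the vanishing from the long exact sequence of the pair,
\[
\cdots \to H^{n-1}(B;\Z) \to H^n(X,B;\Z) \to H^n(X;\Z) \to H^n(B;\Z) \to \cdots.
\]
Here $B$ is an open subset of $\R$, hence a disjoint union of intervals, so $H^{k}(B;\Z)=0$ for $k\ge 1$; and each connected component of $X$ is a non-compact surface with boundary (a compact component would be clopen in the connected non-compact $\overline{\C^+}$, which is impossible), hence is homotopy equivalent to a graph and satisfies $H^{k}(X;\Z)=0$ for $k\ge 2$. For $n\ge 2$ both flanking terms $H^{n-1}(B;\Z)$ and $H^n(X;\Z)$ vanish, forcing $H^n(X,B;\Z)=0$, which is the claim. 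The main obstacle is the middle step: turning the combinatorial isomorphism of complexes into an isomorphism of cohomology groups, i.e. producing a cofinal system of symmetric coverings of $D$ whose traces on $X$ are Leray coverings for the constant sheaf. This is precisely what the arbitrarily fine symmetric coverings with basic (simply connected) intersections built earlier supply; the remaining bookkeeping — that the vanishing condition near $\R$ matches the relative condition on all, possibly disconnected, intersections — is routine.
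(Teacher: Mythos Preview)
Your proof is correct and follows a genuinely different route from the paper's. The paper treats $n=2$ by a direct averaging trick: given an antisymmetric $2$-cocycle on a Cartan covering, it passes to the finer covering by connected components $U_{k,I}^{\pm},\,U_{k,I}^0$, solves the resulting ordinary integer cocycle using the classical vanishing $H^2(D,\Z)=0$, and then replaces the solution $\nu_{\kappa\lambda}$ by the average $\tfrac12(\nu_{\kappa\lambda}-\nu_{\kappa_1\lambda_1})$ with its reflected negative to force antisymmetry; the cases $n\ge 3$ are handled separately by the purely combinatorial fact that Cartan coverings have order~$3$. Your argument instead identifies the whole antisymmetric \v{C}ech complex (on a Cartan covering) with the \v{C}ech complex of $j_!\,\underline{\Z}_{D^+}$ on $X=D\cap\overline{\C^+}$ and reads the vanishing off the long exact sequence of the pair $(X,B)$, using only that $B$ is a $1$-manifold and that each component of $X$ is a non-compact surface with boundary. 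This is more conceptual and uniform in $n$, and it makes the topological reason for the vanishing transparent; the paper's proof is more elementary and self-contained, avoiding sheaves and relative cohomology. Two small points to tighten: the Leray hypothesis you actually need is acyclicity of the intersections for $j_!\,\underline{\Z}_{D^+}$ (equivalently, for the constant sheaf on \emph{both} $X$ and $B$, so that the five-lemma comparison of long exact sequences goes through), not merely for the constant sheaf on $X$ --- this does hold for Cartan coverings since each $U_L\cap X$ and $U_L\cap B$ is contractible; and the paper's definition literally requires $f_L$ to be \emph{constant} (not just locally constant) on $U_L^+$, which for an arbitrary symmetric covering is not the section group of a sheaf, but this discrepancy disappears on the cofinal family of Cartan coverings where every $U_L^+$ is connected.
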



\begin{proof}
Take $n = 2$. Without loss of generality we may assume, by Theorem \ref{BasicCovering}, that there exists a Cartan covering ${\mathcal{U}}$ of $\widetilde{D} \subset \bH,$ which defines an open symmetric covering $\mathcal{U}_I$ when restricted to $\mathbb{C}_I \cong \C$  and that  the  antisymmetric cocycle $C_{coc}$ is given by \[C_{coc} = \{(f_{klm}, U_{klm,I}), f_{klm}:U_{klm,I} \rightarrow \Z \}_{k,l,m \in \bN_0}.\]
  We would like to show  that $C_{coc}$ is a coboundary, i.e. there exists an antisymmetric
cochain $C=\{(f_{kl}, U_{kl,I}), f_{kl}:U_{kl,I} \rightarrow \Z \}_{k,l \in \bN_0}$  which is mapped to $C_{coc}$ by the coboundary operator.

By definition, if $U_{klm,I}$ consists of two symmetric components, $U_{klm,I} = U_{klm,I}^+ \cup
U_{klm,I}^-$, then $f_{klm}$ equals $n_{klm}$ on $U_{klm,I}^+$ and $-n_{klm}$ on $U_{klm,I}^-.$ If it has only one connected component, then
it intersects the real axis and so $n_{klm} = 0$.

 Define a new covering $\mathcal{V} = \{V_{\k}\}_{\k \in \bN_0}$,
 where the
 open sets are the connected components of the sets $U_{k,I}$,
 i.e. the
 sets $U_{k,I}^0:=U_{k,I}$ if connected and the sets $U_{k,I}^+$ and
 $U_{k,I}^-$ otherwise.

We  define the cocycle
$C_I:=\{(\nu_{\kappa\lambda\mu},V_{\kappa\lambda\mu})\}_{\kappa,\lambda,\mu
  \in \bN_0}$ in $D$ for the standard integer-valued cohomology groups in the following manner.
  If $V_{\kappa}$ is a connected component of  $U_{k,I},$ $V_{\lambda}$ a connected component of  $U_{l,I}$,
  $V_{\mu}$  a connected  component of  $U_{m,I}$, then we set
  \[\nu_{\kappa\lambda\mu} :=\left
\{\begin{array}{rl}
n_{klm}, & \mbox{ if }\quad V_{\kappa\lambda\mu} \subset \C_I^+,\\
-n_{klm}, & \mbox{ if }\quad V_{\kappa\lambda\mu} \subset \C_I^-,\\
0, & \mbox{ if }\quad V_{\kappa\lambda\mu} \cap  \R \ne \varnothing,
\end{array}
\right.\]
because if $V_{\kappa\lambda\mu} \cap \R \ne \varnothing,$ we have that $n_{klm}=0.$

Since we know that the standard cohomology group $H^2(D,\Z)$ is trivial, the cocycle $C_I$ is
a coboundary, hence there exists a cochain $C_1:=\{(\nu_{\kappa
  \lambda},V_{\kappa \lambda})\}_{ \kappa,\lambda \in \bN}$ so that
$\nu_{\kappa\lambda\mu} = \nu_{\kappa\lambda} + \nu_{\lambda\mu} +
  \nu_{\mu\kappa}$ in $V_{\kappa\lambda\mu}$.

We are looking for a cochain with the following  antisymmetric properties; 
if $V_{\kappa} = U_{k,I}^+,$ $V_{\lambda} = U_{l,I}^+$ and
$V_{\kappa_1} = U_{k,I}^-,$ $V_{\lambda_1} = U_{l,I}^-,$ we would also like  have
$\nu_{\kappa \lambda} = -\nu_{\kappa_1 \lambda_1}$, which is not generally the case. Using the notation
above, we define a new cochain by setting ${\mu}_{\kappa\lambda}:=
- \nu_{\kappa_1 \lambda_1}$ and ${\mu}_{\kappa_1\lambda_1}:= -
\nu_{\kappa \lambda}$. In the case $V_{\kappa} = U_{k,I}^0$
$V_{\lambda} = U_{l,I}^+,$   
$V_{\lambda_1}= U_{l,I}^-$ we take $\kappa_1 = \kappa$ and use the same formula to define $\mu_{\kappa \lambda}$ as above.
In the case $V_{\kappa} = U_{k,I}^0,$ $V_{\lambda} = U_{l,I}^0,$ we define $\mu_{\kappa \lambda} :=-\nu_{\kappa \lambda} $.
Set $N_{\kappa \lambda} = (\nu_{\kappa \lambda }
+ {\mu}_{\kappa \lambda})/2.$ Then, by construction, $N_{\kappa
  \lambda} = - N_{\kappa_1 \lambda_1}.$ In particular, if $V_{\kappa
  \lambda} \cap \R \ne \varnothing,$
  we get $N_{\kappa\lambda} = 0.$

Since the covering is symmetric, the cochain $C_2:=\{({\mu}_{\kappa \lambda}, V_{\kappa\lambda}), \kappa,
\lambda \in \bN\}$ is mapped to the cocycle $C_I$ and hence also   cochain $C_3:=\{(N_{\kappa\lambda}, V_{\kappa\lambda}), \kappa, \lambda \in \bN\}$ is mapped to $C_I$ by the coboundary operator.

To define the antisymmetric  cochain $C=\{(f_{kl}, U_{kl,I}), k,l \in \bN\}$ we proceed  as follows.
Let $V_{\kappa} \subset U_{k,I}$, $V_{\lambda} \subset U_{l,I}$. If  $U_{kl,I} \cap \R \ne \varnothing$, then it equals  $V_{\kappa \lambda};$
 we set $n_{kl} = N_{\kappa\lambda} (=0).$   If $U_{kl,I} \cap \R = \varnothing,$ then $U_{kl,I}=U_{kl,I}^+ \cup U_{kl,I}^-.$ If $U_{kl,I}^+ =  V_{\kappa \lambda}$ then define $f_{kl}:=N_{\kappa\lambda}$ on $U_{kl,I}^+$  and $f_{kl}:=-N_{\kappa\lambda}$ on $U_{kl,I}^-$. If $U_{kl,I}^- =  V_{\kappa \lambda}$ then define $f_{kl}:=N_{\kappa\lambda}$ on $U_{kl,I}^-$  and $f_{kl}:=-N_{\kappa\lambda}$ on $U_{kl,I}^+$.
 This implies that $C$ is indeed an antisymmetric cocycle that is  mapped to $C_{coc}$ by the coboundary operator thus making $C_{coc}$ a coboundary.

If $n \geq 3,$ the  groups $H^n_a(D,\Z)$ are trivial  because  there exist arbitrarily fine  Cartan coverings and they have order at most $3$, which means that no four distinct sets of such a covering intersect.
\end{proof}

\end{document}